\documentclass[11pt,reqno]{amsart}

\usepackage{xcolor}
\usepackage{tikz-cd}
\usepackage[margin=1.12in]{geometry}
\usepackage{amsmath,amssymb,amsthm,mathtools}
\usepackage{enumitem}
\usepackage[colorlinks=true,linkcolor=blue,citecolor=blue,urlcolor=blue]{hyperref}
\usepackage{bm}
\usepackage{mathrsfs}

\numberwithin{equation}{section}

\theoremstyle{plain}
\newtheorem{theorem}{Theorem}[section]
\newtheorem{proposition}[theorem]{Proposition}
\newtheorem{lemma}[theorem]{Lemma}
\newtheorem{corollary}[theorem]{Corollary}
\newtheorem{question}[theorem]{Question}
\theoremstyle{definition}
\newtheorem{definition}[theorem]{Definition}

\newtheorem{introthm}{Theorem}

\theoremstyle{remark}
\newtheorem{remark}[theorem]{Remark}

\newcommand{\R}{\mathbb{R}}

\newcommand{\T}{\mathbb{T}}
\newcommand{\PP}{\mathbb{P}}
\newcommand{\ii}{\sqrt{-1}}
\newcommand{\ca}{\mathcal{C}}
\newcommand{\p}{\partial}
\newcommand{\bp}{\bar\partial}
\newcommand{\dc}{\mathrm{d}^{c}}
\newcommand{\ddc}{\sqrt{-1}\partial\bar \partial}
\DeclareMathOperator{\Real}{Re}

\title{ALH Gravitational Instantons and Rational Surfaces} 
\author{Yifan Chen}
\address{Universit\`a di Roma Tor Vergata, Dipartimento di Matematica, Via della Ricerca Scientifica 1, 00133 Roma, Italy}
\email{chen@mat.uniroma2.it}
\author{Chunhui Wei}
\address{School of Mathematics, Zhejiang University, Hangzhou 310058, People's Republic of China}
\email{chunhuiwei@zju.edu.cn}
\date{\today}

\begin{document}

\begin{abstract}
We construct complete ALH hyperk\"ahler metrics on complements of anticanonical elliptic curves with zero self-intersection in smooth rational surfaces. This answers the existence question of Filip--Tosatti~\cite{FT26}, related to the picture proposed by Haskins--Hein--Nordstr\"om~\cite{HHN15}. Conversely, we prove that every hyperk\"ahler ALH gravitational instanton admits, for each induced complex structure, a compactification by an anticanonical elliptic curve in a rational surface.
\end{abstract}

\maketitle
\tableofcontents

\section*{Introduction}

In this paper, we study asymptotically cylindrical (ACyl) Calabi-Yau manifolds. These are complete K\"ahler Ricci flat manifolds whose geometry at infinity converges exponentially to a product cylinder $(0,\infty)\times Y$. In complex dimension 2, these are special hyperk\"ahler gravitational instantons, namely ALH gravitational instantons. 

Examples are constructed by Tian--Yau~\cite{TY} on the complement of anticanonical divisors with trivial normal bundle. Building on the work of Tian--Yau~\cite{TY}, Hein~\cite{Hein12} constructed ALH Calabi--Yau metrics on complements of smooth fibers in rational elliptic surfaces and developed their deformation theory. Haskins--Hein--Nordstr\"om (HHN)~\cite[Theorem~D]{HHN15} established a general existence theorem for ACyl Calabi--Yau metrics on complements of anticanonical divisors with holomorphically torsion normal bundles. 

Then a natural question is whether we can construct ACyl Calabi--Yau metrics when the normal bundle is non-torsion. It is related to the picture proposed in~\cite[Remark~1.6]{HHN15} and stated explicitly in~\cite[Question~6.3.1]{FT26}. 
\begin{question}\label{ques:main}
     Let $E\subset \PP^2$ be a smooth cubic curve, let $S$ be the blow-up of $\PP^2$ at nine distinct points on $E$, and let $C\subset S$ be the strict transform of $E$. Does $S\setminus C$ always admit a complete Calabi-Yau metric? 
\end{question}

We first answer this question affirmatively by constructing complete ACyl Calabi--Yau metrics beyond the torsion case. 
\begin{introthm}[Theorem~\ref{thm:ma}]\label{thm:main}
Let $M$ be a compact K\"ahler manifold with a connected, reduced, smooth anticanonical divisor $D$ with numerically trivial normal bundle. Let $X:=M\setminus D$ and let $i: X\to M$ be the inclusion. Then for any K\"ahler class $\beta\in H^2(M,\mathbb{R})$, $X$ admits a complete asymptotically cylindrical Calabi-Yau metric in $i^*\beta$.
\end{introthm}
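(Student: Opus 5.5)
The plan is to follow the Tian--Yau / Hein / HHN scheme: build an explicit asymptotically cylindrical Kähler metric in $i^*\beta$ whose Ricci potential decays exponentially, then solve the complex Monge--Ampère equation on the resulting manifold of bounded geometry. The only new difficulty compared with~\cite[Theorem~D]{HHN15} is that $N_D$ is numerically trivial but possibly non-torsion. In that case no holomorphic coordinate exhibits a punctured neighbourhood of $D$ as biholomorphic to a punctured disc bundle with flat structure to all orders. So I would replace the holomorphic flat model by a hermitian one.

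\textbf{Step 1: the model end.} Since $c_1(N_D)=0$ and $D$ is a smooth curve (more generally a Kähler manifold), $N_D$ carries a flat hermitian metric $h$, with unitary flat connection, unique up to scaling. Choose a holomorphic section $s$ of $\mathcal O_M(D)=K_M^{-1}$ vanishing on $D$. Choose a smooth hermitian metric $\|\cdot\|$ on $\mathcal O(D)$ which restricts to $h$ on $D$ to first order, with $\ii\partial\bar\partial\log\|s\|^2$ vanishing on $D$ to suitable order. Set $t=-\log\|s\|^2$. On $\{t>T\}$ I would take the model form
\[
\omega_0=\omega_D+\tfrac{1}{2}\ii\partial\bar\partial\, t^2 ,
\]
where $\omega_D$ is a closed extension of a Kähler form in $\beta|_D$. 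Because $h$ is flat, $\tfrac12\ii\partial\bar\partial t^2$ becomes, up to exponentially small error in $e^{-t/2}$, the product cylinder metric $dt^2+\theta^2$ plus a flat contribution along $D$. The holomorphic volume form $\Omega$, which has a simple pole along $D$, should satisfy $\ii\Omega\wedge\bar\Omega=c\,\omega_0^n(1+O(e^{-\delta t}))$, with a suitable normalisation of $\omega_D$ and $c$. The key point is that the first-order neighbourhood of $D$ is controlled by $N_D$ alone. Non-torsion only obstructs holomorphic linearisation at higher order, and those errors are $O(|s|^k)$, i.e.\ exponentially decaying in $t$.

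\textbf{Step 2: the global form.} Glue $\omega_0$ to a Kähler form $\omega_M\in\beta$ using a cutoff and $\ii\partial\bar\partial$ of a convex function of $t$, as in Tian--Yau. This gives a complete Kähler form $\omega$ on $X$ in the class $i^*\beta$ which is ACyl with exponential rate. Its Ricci potential $f=\log(\ii\Omega\wedge\bar\Omega/\omega^n)$ has compactly supported-plus-exponentially-decaying behaviour, and I would adjust it so that $\int_X(e^f-1)\omega^n=0$.

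\textbf{Step 3: solving the equation.} Invoke the existence theory for $(\omega+\ii\partial\bar\partial u)^n=e^f\omega^n$ on ACyl manifolds (Tian--Yau, or Hein's version with exponential weights), which gives $u$ bounded with $u\to$ const at exponential rate. This yields an ACyl Ricci-flat metric in $i^*\beta$.

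The main obstacle is Step 1. One must verify rigorously that, without a holomorphic tubular neighbourhood adapted to a torsion bundle, the errors in $\omega_0$ and in the volume form decay exponentially rather than just polynomially. I would first try a $C^\infty$ (not holomorphic) identification of a neighbourhood of $D$ with a disc bundle in $N_D$ whose differential is the identity along $D$. Then I would estimate $\bar\partial$-errors in the cylindrical metric: a term $O(r^k)$ becomes $O(e^{-kt/2})$ with all derivatives. Completeness and the ACyl structure then follow from flatness of $h$ via the cross-section $Y=$ circle bundle of $N_D$.
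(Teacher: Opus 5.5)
Your outline (an ACyl K\"ahler metric in $i^*\beta$ with exponentially decaying Ricci potential, then the ACyl Monge--Amp\`ere theorem) is the paper's. There is a genuine gap, though: the step specific to this setting is left unspecified. You say you would adjust $f$ so that $\int_X(e^f-1)\omega^n=0$, but give no mechanism.

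This condition is necessary for a solution $u$ tending to a constant exponentially: the integral of $(\omega+\ddc u)^n-\omega^n$ is a boundary flux that vanishes in the limit. None of the obvious adjustments is available. Since $f$ must tend to $0$, you cannot add a constant to $f$ or rescale $\Omega$ without breaking $\omega_{\mathcal C}^n=c\,\Omega_{\mathcal C}\wedge\bar\Omega_{\mathcal C}$. You also cannot leave the class $i^*\beta$.

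The missing idea (Lemmas~\ref{lem:kahler-cutoff-perturbation} and~\ref{lem:integral-condition}) is to add a potential that is \emph{linear} in $t$ at infinity. Set $\omega_{A,R,T}=\omega_X+2A\,\ddc\chi_{R,T}(t_X)$ with $\chi_{R,T}(t)=t-c_{R,T}$ for $t\ge R+T$.
\begin{itemize}
\item Since $\ddc t_X=O'(e^{-t})$, this preserves the class and the cylindrical model.
\item Stokes' theorem gives $\int_X(\omega_{A,R,T}^n-\omega_X^n)=AnI_Y$ with $I_Y=\int_Y\eta\wedge\pi^*\omega_D^{n-1}>0$. So the total integral is affine in $A$ with nonzero slope and vanishes for exactly one $A$.
\item Because $t_X$ is only asymptotically pluriharmonic, keeping $\omega_{A,R,T}$ positive for that prescribed $A$ needs a slowly varying cutoff ($0\le\chi''\le C/T$, with $R$ chosen large, then $T$).
\end{itemize}
In your own Step~2 this amounts to using the linear coefficient of the convex profile in $t$ as the free parameter. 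You still have to identify it and compute its effect.

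Your Step~1 estimate is also too naive as stated. The count ``$O(r^k)$ becomes $O(e^{-kt/2})$'' ignores that in $g_{\mathcal C}$ the normal frame $\partial_w$ has length $\sim|w|^{-1}$. A component of $J-J_L$ sending tangential vectors to normal ones, with coefficient vanishing only to first order along $D$, is therefore $O(1)$, not decaying. One must show these coefficients are $O(|w|^2)$. The paper does this in Proposition~\ref{prop:Acyl_asym}:
\begin{itemize}
\item $J^2=-1$ kills the first-order part of the $(1,0)\to(1,0)$ components;
\item integrability of $J_M$ (the bracket $[Z_1,Z_\alpha]$ must be of type $(1,0)$) kills the $w_1$-coefficient $P^1_\alpha$;
\item the fact that $d\Phi|_D$ induces the holomorphic isomorphism $N_D\cong L$ kills the $\bar w_1$-coefficient $P^{\bar 1}_\alpha$.
\end{itemize}
This is the precise form of your heuristic that the first-order neighbourhood is controlled by $N_D$. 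For $\ddc t_X$, by contrast, no higher-order vanishing is needed. Only the tangential restriction of the curvature must vanish on $D$, which is automatic because $h_M|_D=h$ is flat (Lemma~\ref{cor:ddc-decay}).

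Finally, your ``suitable normalisation of $\omega_D$'' must mean two things: taking $\omega_D$ Ricci-flat in $\beta|_D$, and choosing an extension $\omega_M\in\beta$ with $\iota^*\omega_M=\omega_D$. Otherwise the volume ratio tends to a nonconstant function on $D$ and $f$ does not decay.
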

The metrics we constructed here are exponentially asymptotic to the following model. Consider a punctured disc bundle $\mathcal C=\{\xi\in L:0<|\xi|_h<1\}$, where $h$ is a flat Hermitian metric. If $t=-\log|\xi|_h$, then the flatness of $h$ gives $\ddc t=0$. Thus, for every $\sigma>0$, the form 
$$
    \omega_{\mathcal C,\sigma}=\pi^*\omega_D+\sigma\ddc t^2
$$
defines a cylindrical Ricci flat metric if we choose $\omega_D$ Ricci flat on $D$.

The proof starts with a K\"ahler metric in the prescribed class that approaches the cylindrical model exponentially. A potential linear in $t$ at infinity is used to meet the volume normalization while preserving the class and the model. The ACyl Monge--Amp\`ere theorem~\cite[Theorem~4.1]{HHN15} then gives the Ricci flat metric.

For higher dimensional case, Theorem~\ref{thm:main} also applies but not essentially giving new metrics since \cite[Theorem~A]{HHN15} proved a splitting structure theorem for any ACyl Ricci flat metric and \cite[Theorem~B]{HHN15} shows that the model space has to be a holomorphic torsion line bundle when the ACyl Calabi-Yau manifold is simply-connected and irreducible when its complex dimension $n>2$. Conversely, \cite[Theorem~C]{HHN15} then compactifies them by adding a divisor with torsion normal bundle at infinity and recovers the metric by \cite[Theorem~D]{HHN15}. 

For ALH gravitational instantons, \cite[Remark~1.6]{HHN15} asks the following sharper converse question motivated by \cite[Theorem~C]{HHN15}:
\begin{question}
    Let $X$ be a hyperk\"ahler ALH gravitational instanton with any fixed parallel complex structure. Does it admit a holomorphic compactification $X=M\setminus D$ such that $M$ is obtained from $\PP^2$ by blowing up 9 points along a smooth cubic curve?
\end{question}

The following theorem answers this question affirmatively for every complex structure in the hyperk\"ahler sphere:
\begin{introthm}[Theorem \ref{thm:canonical_compactification}]\label{thm:main2}
    Let $(X,g,I_1,I_2,I_3)$ be a hyperk\"ahler ALH gravitational instanton. For any $a=(a_1,a_2,a_3)\in S^2$, define $I_a=a_1I_1+a_2I_2+a_3I_3$. Then the complex surface $(X,I_a)$ has a smooth compactification $X=M_a\setminus E_a$ where $M_a$ is $\operatorname{Bl}_{p_1,\ldots,p_9}\PP^2$, where $E_a$ is the strict transform of a smooth cubic and the blow-up centers lie on its successive strict transforms. 
\end{introthm}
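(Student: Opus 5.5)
Fix $a\in S^2$ and write $J=I_a$. The plan has three parts. First compactify $(X,J)$ by adding one smooth elliptic curve at infinity. Then show the resulting compact surface is rational with $E_a^2=0$ and $E_a\in|-K|$. Finally identify it as an iterated blow-up of $\PP^2$ along a cubic.

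\emph{Step 1: compactification.} By definition of ALH, outside a compact set $X$ is diffeomorphic to $(R,\infty)\times Y$, where $Y=T^3$ is a flat $3$-torus, and $g$ converges exponentially to $dt^2+g_Y$ together with the parallel hyperk\"ahler triple. For the model cylinder, any constant complex structure in the sphere identifies $(R,\infty)\times T^3$ with a punctured disc bundle $\mathcal C\subset L$ over an elliptic curve $D$. Here $L$ is a flat, topologically trivial holomorphic line bundle whose holonomy is determined by $a$ and by the $T^3$ lattice, and $t=-\log|\xi|_h$. I would then correct $J$ to the model complex structure $J_0$ at infinity. The difference $J-J_0$ is $O(e^{-\delta t})$. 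I would solve a $\bar\partial$-problem on the ACyl end (weighted Hodge theory, in the spirit of the proof of \cite[Theorem~C]{HHN15}) to produce holomorphic functions, or local holomorphic coordinates, on the end of $(X,J)$ that are exponentially close to those of $\mathcal C$. These give a biholomorphism between a neighborhood of infinity in $(X,J)$ and a punctured neighborhood of the zero section in a small deformation of $L$. Filling in the zero section gives a compact complex surface $M_a=X\cup E_a$ with $E_a\cong D$ smooth elliptic and $N_{E_a}$ numerically trivial. The main obstacle is that $N_{E_a}$ may be non-torsion. In that case the neighborhood of $E_a$ need not be holomorphically linearizable (Ueda/Arnold type phenomena), so I cannot simply quote HHN. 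I would try to exploit exponential decay, which is a strong condition: the correction is small in a weighted space where the relevant cohomology $H^1(D,N^{-k})$ vanishes for $N$ non-torsion. The aim is to glue the end $\{t>T\}$ of $\mathcal C\cup D$ directly to $X$ via the Newlander--Nirenberg theorem with estimates.

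\emph{Step 2: global properties.} The holomorphic $2$-form $\Omega_a$, namely the combination of the K\"ahler forms $\omega_b$ with $b\perp a$, is asymptotic to $\frac{d\xi}{\xi}\wedge dz$. It therefore extends to a meromorphic $2$-form on $M_a$ with a simple pole exactly along $E_a$, so $-K_{M_a}=E_a$ and $E_a^2=0$. The space $M_a$ is K\"ahler: take a cutoff $\omega_a+\ddc(\chi\,\phi)$ near $E_a$, where $\phi$ is a local potential of the model cylinder metric, and glue it to a K\"ahler form on the neighborhood of $E_a$. Rationality then follows. The holomorphic $2$-forms satisfy $h^{2,0}=h^0(K)=h^0(-E_a)=0$. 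Moreover $h^0(mK)=0$ for all $m\ge1$, so the Kodaira dimension is $-\infty$. The irregularity $q$ vanishes, because $b_1(X)=0$ for ALH instantons (via $L^2$ harmonic theory and Mayer--Vietoris with $T^3$), and hence $b_1(M_a)=0$. Castelnuovo's criterion then gives that $M_a$ is rational.

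\emph{Step 3: identification.} $K^2=E_a^2=0$, so Noether gives $\chi_{top}=12$, hence $b_2=10$ and $M_a$ is not minimal. Any $(-1)$-curve $F$ satisfies $F\cdot E_a=-K\cdot F=1$. Contract such a curve; the image of $E_a$ is still a smooth anticanonical elliptic curve, now of self-intersection $1$. Iterate, contracting at each stage a $(-1)$-curve meeting the current image of $E_a$ transversally once. After $9$ contractions we reach a rational surface with $b_2=1$, namely $\PP^2$, on which $E_a$ has self-intersection $9$, so it is a smooth cubic. The existence of a suitable $(-1)$-curve at each stage follows from Riemann--Roch, since $-K$ is effective and the surfaces are rational and not minimal (the only minimal rational surface with $K^2=8$ carrying such a curve being handled by choosing the contraction to avoid $\mathbb F_n$). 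Reversing the contractions presents $M_a$ as $\mathrm{Bl}_{p_1,\dots,p_9}\PP^2$, with each center lying on the successive strict transform of the cubic.
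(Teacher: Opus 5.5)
Step~1 has a genuine gap, and it is the step that carries all the analysis. You say the difficulty is that a neighbourhood of $E_a$ may be non-linearizable. To get around it you propose a weighted $\bar\partial$-problem that produces holomorphic coordinates on the end, and hence a biholomorphism onto a punctured neighbourhood of the zero section in (a deformation of) $L$. As written, this proves too much. Apply it to $S\setminus C$ for a nonlinearizable anticanonical curve $C\subset\operatorname{Bl}_9\PP^2$ as in \cite{FT26}. That complement carries ALH metrics by Theorem~\ref{thm:main}, with end map the normal exponential map. Your map would be exponentially close to the model identification, hence bounded near $C$. It would then extend across $C$ by Riemann's extension theorem and linearize the neighbourhood of $C$, which is a contradiction. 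The vanishing of $H^1(D,N^{-k})$ only yields a formal solution. Solving the horizontal $\bar\partial$-equation for $N^{-k}$-valued forms on $D$ costs a factor comparable to the inverse distance from $N^k$ to $\mathcal O_D$ in $\operatorname{Pic}^0(D)$. For Liouville-type holonomy these small divisors are not absorbed by any exponential weight. Nor can you simply transplant $J$ to $\mathcal C_r^*$ and invoke Newlander--Nirenberg. In smooth coordinates near $D$, the component of $J-J_L$ sending the normal direction to horizontal directions is only $O(\rho^{\tau-1})$. It may blow up, so the naive extension need not even be bounded.

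The missing idea is that no linearization is needed: one extends the complex structure, not a model biholomorphism. This is Theorem~\ref{thm:extension}, the unitary-flat version of \cite[Theorem~3.1]{HHN15}. First pull back by a diffeomorphism $F$, exponentially close to the identity, that makes every fibre disc $J$-holomorphic. This is a fibrewise nonlinear Cauchy--Riemann equation on punctured discs, solved by a fixed-point argument, and it removes the singular normal-to-horizontal term. Integrability of $J$ then gives a fibrewise elliptic equation whose bootstrap shows that $F^*J$ extends $C^{1,\alpha}$, and then smoothly, across the zero section. Newlander--Nirenberg produces $M_a$, and $[\partial_w]$ identifies $N_{E_a}$ with $L$. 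A unitary flat $L$ has constant $U(1)$ transition functions, so all of this is local in flat charts. HHN's argument therefore does apply, and whether $L$ is torsion is irrelevant.

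Granting $M_a$, your Steps~2--3 are a workable alternative route. You use $b_1(X)=0$ to get $b_1(M_a)=0$; your Mayer--Vietoris step is fine since $Y\to E_a$ has zero Euler class. This gives K\"ahler and $q=0$, and then Castelnuovo applies. The paper instead uses only $-K_{M_a}=E_a$: it excludes class VII via Noether ($b_2=-K^2=0$) and Bogomolov, and irrational ruled surfaces via the vanishing of $H^1(\mathcal O_{M_a})\to H^1(\mathcal O_{E_a})$. On your route, $L^2$ Hodge theory alone only gives $H^1(X)\hookrightarrow H^1(T^3)$. You also need that bounded harmonic $1$-forms are parallel (Bochner plus parabolicity) and that $X$ carries no parallel $1$-form. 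Your cut-off K\"ahler form is not available as stated, because the model form is not $\ddc$-exact near $E_a$ (and for general $a$ it is not even split). It is redundant anyway once $b_1$ is even. Step~3 is essentially the paper's contraction argument.
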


The proof follows the strategy of compactification result~\cite[Theorem~C]{HHN15}, which is sketched in Theorem~\ref{thm:global_compactification}, showing that every ACyl Calabi-Yau manifold with a unitary-flat cylindrical end in the sense of Definition~\ref{def:cplxAcyl} admits a K\"ahler compactification. Applying Theorem~\ref{thm:ma} we also show that the original metric could be recovered by the construction in Theorem~\ref{thm:main} using the uniqueness result~\cite[Theorem~E]{HHN15}. Thus this gives a compactification and reconstruction theorem of ACyl Calabi--Yau metric with unitary flat end at infinity. 

\begin{remark}
Chen--Chen~\cite[Theorems\~1.2]{ChenChenI} and \cite[Theorem~1.3]{ChenChenIII} proved that every ALH gravitational instanton admits, for \emph{at least one} induced complex structure, a compactification by a smooth fiber in a rational elliptic surface. Theorem~\ref{thm:main2} gives a rational compactification for \emph{every} induced complex structure. Furthermore, Chen--Chen~\cite[Theorem~1.5]{ChenChenIII} proved a Torelli theorem describing the global moduli space of hyperk\"ahler ALH gravitational instantons up to biholomorphism preserving its hyperk\"ahler structures. Theorem~\ref{thm:main} and Theorem~\ref{thm:main2} give a geometric realization of the ALH isometric moduli space matching the dimension counting in~\cite{Hein12}.
\end{remark}

Section~\ref{sec:model} describes the cylindrical models, and Section~\ref{sec:Acyl_mfd} proves the metric construction. Section~\ref{sec:compact} establishes holomorphic and K\"ahler compactification in the unitary flat setting and proves reconstruction of the original metric. Section~\ref{sec:ALH} applies this to ALH gravitational instantons and identifies their compactification as rational surfaces, completing the converse picture.

\smallskip
\noindent\textbf{Acknowledgements.} The authors thank professor Song Sun for suggesting this question, guidance and constant support. The authors are grateful to Hans-Joachim Hein and Song Sun for helpful feedback and suggestions on an earlier draft. We also thank Eleonora Di Nezza and Junsheng Zhang for many helpful discussions and suggestions. Part of this work was carried out during the authors' visit to Zhejiang University and we thank IASM for its hospitality. Y.C. was funded by the European Research Council (ERC) through the SiGMA project (Grant Agreement No. 101125012, PI: Eleonora Di Nezza). 

\smallskip
\noindent\textbf{Declaration of AI usage.} GPT 5.6 Sol and GPT 6 Astra are used to find the reference, polish the language and improve the exposition. Proof of rationalness in Theorem~\ref{thm:canonical_compactification} is suggested by GPT 6 Astra. The authors take full responsibility for the content, arguments, and correctness of the article.

\section{Complete asymptotically cylindrical manifold}\label{sec:model}
\subsection{Weighted cylindrical norms and ACyl manifolds}

Let $(Y,h_Y)$ be a closed Riemannian manifold and let
$$
    X_\infty=[0,\infty)\times Y,\qquad g_\infty=\sigma\,dt^2+h_Y,\qquad {\text{for some }}\sigma>0.
$$
We define the following weighted cylindrical H\"older norm on model space. 

\begin{definition}[Weighted cylindrical norms]\label{def:weighted-cylindrical-norms}
For a compactly supported smooth section $T$, define
$$
    \|T\|_{C^{k,\alpha}_\delta(X_\infty,g_\infty)}:=\|e^{\delta t}T\|_{C^{k,\alpha}(X_\infty,g_\infty)},\qquad k\geq0,\quad\delta\in\mathbb R.
$$
The weighted H\"older space $C^{k,\alpha}_\delta(X_\infty,g_\infty)$ is the completion in this norm. Compactly supported sections may be nonzero at the finite boundary, so no boundary condition is imposed there. We write
$$C^\infty_\delta(X_\infty,g_\infty) := \bigcap_{k\geq0}C^{k,\alpha}_\delta(X_\infty,g_\infty).
$$
\end{definition}

\begin{definition}[Riemannian ACyl manifolds]\label{def:riemannian-acyl}
A complete Riemannian manifold $(X,g)$ is \emph{asymptotically cylindrical}, or \emph{ACyl}, if there are a compact domain $K\subset X$ with smooth boundary, a cylinder $(X_\infty,g_\infty)$ as above, a diffeomorphism
$$
    \Phi:X_\infty\longrightarrow X\setminus\mathrm{int}K,
$$
and a constant $\delta>0$ such that
$$
    |\nabla^k_{g_\infty}(\Phi^*g-g_\infty)|\leq C_k e^{-\delta t}
$$
\end{definition}

Extending $t$ smoothly over the compact part of $X$ defines $C^{k,\alpha}_\delta(X,g)$ smooth tensors by pulling back to the model $X_\infty$ (see \cite[Definition~2.1]{HHN15}). Beside the natural exponential weight, we can also compare with other positive functions with the following notation:
\begin{definition}\label{d:C_infty}
For a smooth tensor $T$ and a positive function $f$ on an ACyl manifold, we write $$T=O'(f)$$ if, for every $k\geq0$, there is a constant $C_k$ and $C$ such that on $\{t>C\}$ we have 
$$
 |\nabla_{g_\infty}^k\Phi^*T|_{g_\infty}\leq C_k f.
$$ 
\end{definition}

\subsection{Complete K\"ahler Ricci flat cylindrical model}\label{sec:cplx-model}
In this section, we recall a Calabi-Yau cylindrical model. Let $(D, \omega_D)$ be an $(n-1)$-dimensional compact Calabi-Yau manifold with trivial canonical bundle. Fix a nowhere vanishing holomorphic volume form $\Omega_D$ on $D$ with some constant $c_D$ such that $\omega_D^{n-1}=c_D\Omega_D\wedge\bar\Omega_D.$

Let $L$ be a holomorphic unitary flat line bundle on $D$ with projection $\pi:L\to D$. Fix a unitary flat Hermitian metric $h$ on $L$, and denote the complex structure on total space of $L$ by $J_L$. Define $\rho:=|\cdot|_h$, $t:=-\log\rho$, and
$$
    \mathcal C_r=\{\rho<r\},\qquad \mathcal C_r^*=\mathcal C_r\setminus D, \qquad\mathcal C:=\mathcal C_1^*.
$$
In a unitary parallel holomorphic frame $s$, write $\xi=ws$ with $w=e^{-t-\ii\theta}$. We use the conventions $\dc_J:=\frac{i}{2}(\bar\partial_J-\partial_J), \ddc_J:=\ii\partial_J\bar\partial_J $. Denote the connection form by $\eta:=2\dc_{J_L}t=d\theta$. Fiber direction $dw/w=-dt-\ii\eta$ defines a global logarithmic one-form on $\mathcal C$, denoted by $d\xi/\xi$. We have a nowhere vanishing volume form
$$
    \Omega_\mathcal C:=\frac{d\xi}{\xi}\wedge \pi^*\Omega_D.
$$

Since $\ddc_{J_L}t=0$ and $\ddc_{J_L}t^2=dt\wedge\eta$, we obtain the K\"ahler form for any $\sigma>0$ 
$$
    \omega_{\mathcal C,\sigma}:=\pi^*\omega_D+\sigma\ddc_{J_L}t^2=\pi^*\omega_D+\sigma\,dt\wedge\eta
$$
with corresponding cylindrical metric 
$$
    g_{\mathcal C,\sigma}:=\pi^*g_D+\sigma(dt^2+\eta^2).
$$
Straightforward computation shows that $\omega_{\mathcal C,\sigma}^n=c_\sigma\,\Omega_\mathcal C\wedge\bar\Omega_\mathcal C$ for some constant $c_\sigma$. Thus $g_{\mathcal C,\sigma}$ is Ricci flat. 

For Calabi-Yau manifolds, we include the complex structure and holomorphic volume form in the asymptotic data.
\begin{definition}[ACyl Calabi-Yau manifolds]\label{def:cplxAcyl}
A complete Calabi-Yau manifold $(X,J,\omega,\Omega,g)$, is said to have a \emph{unitary-flat cylindrical end} if there exists a cylindrical model $(\mathcal C,J_L,\omega_\mathcal C,\Omega_\mathcal C,g_\mathcal C)$ as above and a diffeomorphism
$$
    \Phi:\{t\geq t_0\}\subset\mathcal C \longrightarrow X\setminus \mathrm{int}K
$$
such that, for some $\delta>0$,
\begin{equation}\label{eq:complex-acyl-weighted}
    \Phi^*J-J_L,\quad \Phi^*\omega-\omega_\mathcal C,\quad \Phi^*\Omega-\Omega_\mathcal C \in C^\infty_\delta(\{t\geq t_0\},g_\mathcal C).
\end{equation}
\end{definition}

\section{A construction of complete asymptotically cylindrical Calabi-Yau manifolds}\label{sec:Acyl_mfd}

We now move from the model geometry to the complete noncompact manifolds. Let $M$ be a compact K\"ahler manifold, and let $D\subset M$ be a smooth connected anticanonical divisor with numerically trivial normal bundle $L=N_D$.

Let $\iota:D\to M$ be the inclusion. We fix a K\"ahler class $\beta$ on $M$ and define $\beta_D:=\iota^*\beta$. By adjunction and Yau's theorem, there exists a unique Ricci flat metric $\omega_D$ on $D$ representing $\beta_D$. Since $[\omega_D]=\iota^*\beta$, the extension theorem \cite[Proposition~8.8]{GuedjZeriahi2017} gives a K\"ahler metric $\omega_M$ representing $\beta$ on $M$ such that $\iota^*\omega_M=\omega_D$, with corresponding Riemannian metric $g_M$. Let $X=M\setminus D$.

\subsection{Initial K\"ahler metric with exponential error}

Choose a defining section $S\in \Gamma(M,[D])$ up to a constant. Under $[D]\simeq K_M^{-1}$, the inverse $S^{-1}$ is a meromorphic section of $K_M$ with a simple pole along $D$, and its restriction is a nowhere-vanishing holomorphic volume form $\Omega_X$ on $X$. We fix $S$ by normalizing
$\mathrm{Res}_D\,\Omega_X = \Omega_D$, and also fix a Hermitian metric $h_M$ on $[D]\simeq K_M^{-1}$ such that its restriction to $D$ is the chosen flat Hermitian metric $h$ on $L=[D]|_D$, and $|S|_{h_M}< 1$.

In the following asymptotic estimates, norms and covariant derivatives on the model are taken with respect to $g_{\mathcal C,\sigma}$ for fixed $\sigma$.

The following proposition gives the asymptotic comparison between the complex structure of a neighborhood of the zero section in $L$ with the complex structure of a tubular neighborhood of $D\subset M$ by the normal exponential map associated with $g_M$. It is the analogue of the standard Tian--Yau asymptotic estimates in~\cite{ConlonHein15} and~\cite{HSVZ2022}. The normal exponential map $\Phi$ gives us the closeness of holomorphic volume form $\Omega_X$ and $\Omega_\ca$ as the proof of \cite[Proposition~3.4(b)]{HSVZ2022} which yields the closeness of complex structure as shown in \cite[Lemma~2.14]{CH13}. However, we present here a different proof formulated directly in terms of the complex structure itself (see also~\cite[lemma 2.6]{JZ24}).

\begin{proposition}\label{prop:Acyl_asym}
There exists an almost holomorphic diffeomorphism $\Phi$ from a neighborhood of $\mathbf{0}_L$ in $L$ onto a neighborhood of $D$ in $M$. More precisely, we have $$\Phi^*J_{M}-J_L=O'(e^{-t}).$$
\end{proposition}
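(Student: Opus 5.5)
Take $\Phi$ to be the normal exponential map of $g_M$, composed with the identification of $L=N_D$ with the $g_M$-orthogonal complement $TD^{\perp}\subset TM|_D$. Concretely, $\Phi(\xi)=\exp^{g_M}_{\pi(\xi)}(\xi)$ for $\xi$ in a small disc bundle. This is a diffeomorphism from a neighbourhood of $\mathbf 0_L$ onto a tubular neighbourhood of $D$, and it restricts to the identity on the zero section. Its differential along $\mathbf 0_L$ is the canonical identification $T_{\mathbf 0_L}L=TD\oplus L\cong TD\oplus N_D=TM|_D$. Since $D$ is a complex submanifold and $g_M$ is K\"ahler, the orthogonal complement $TD^\perp$ is $J_M$-invariant. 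Hence the identification $N_D\cong TD^\perp$ is complex linear, and along the zero section $\Phi^*J_M=J_L$ exactly.

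The next step is to show that the defect $A:=\Phi^*J_M-J_L$, a smooth $(1,1)$-tensor on a neighbourhood of the zero section, vanishes to first order in the fibre direction. Since $A|_{\mathbf 0_L}=0$, in local coordinates $(z,w)$ adapted to a parallel unitary frame, Taylor expansion in $w$ gives $|A|_{\text{Euclid}}=O(|w|)$, together with all Euclidean derivatives of $A$ being bounded. The genuine content is converting this into the cylindrical statement $|\nabla^k_{g_{\mathcal C}}A|_{g_{\mathcal C}}=O(e^{-t})$, with $|w|=e^{-t}$.

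Write the cylindrical metric in the coordinates $(z,\log w)$. The fibre coframe $dw/w$ has unit length, so $|dw|_{g_{\mathcal C}}\sim|w|$ and $|\partial_w|_{g_{\mathcal C}}\sim|w|^{-1}$. Decompose $A$ into its components. The components of $A$ of type $\partial_w\otimes dz$ could be dangerous: such a term has cylindrical size $|w|^{-1}\cdot|A^w_{z}|$, and $A^w_z=O(|w|)$ only yields $O(1)$. So one needs the finer fact that $A^w_{z}=O(|w|^2)$, or more robustly to exploit structure. The route I would take is the following.

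\begin{enumerate}[label=(\roman*)]
\item Check the first-order jet of $\Phi$. Because $\Phi$ is a normal exponential map, the fibre $L_p$ maps to a totally geodesic-at-$p$ surface whose tangent along the zero section is $N_D$. Then use that $D$ is complex in a K\"ahler manifold, so it has vanishing second fundamental form in the $(2,0)$ sense: $\nabla J_M=0$ forces the relevant linear-in-$w$ terms of $A^w_z$ to come from $\nabla J_M$ and the holomorphicity of the second fundamental form.
\item Alternatively, and as my first attempt, modify $\Phi$ by a fibrewise correction so that it is holomorphic to first order. One can always choose holomorphic coordinates $(z,w)$ on $M$ near $D$ in which $D=\{w=0\}$ and $\partial_w$ restricts to the local frame of $N_D$. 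Since $L$ is unitary flat, these local charts can be patched using flat transition functions to first order in $w$. Any two such diffeomorphisms agree modulo $O(|w|^2)$ in $w$ and $O(|w|)$ in $z$, which is exactly the error allowed by the weighted scaling. The paper seems to insist on the exponential map, so I would verify that the exponential map agrees with such a first-order holomorphic chart to this order, via (i).
\end{enumerate}

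With the weight count done, the remaining components scale correctly. The $\partial_z\otimes dz$ and $\partial_w\otimes dw$ components are $O(|w|)$ in cylindrical norm. The $\partial_z\otimes dw$ component gains a factor $|w|$ and is even smaller. Derivatives are handled by noting that the cylindrical derivatives are $\partial_z$ and $w\partial_w$, both of which preserve the class of functions that are $O(|w|^j)$ with smooth coefficients. This gives $\Phi^*J_M-J_L=O'(e^{-t})$.

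The main obstacle I expect is the $\partial_w\otimes dz$ component in step (i): showing that it vanishes to second order in $w$. This is the point where the unitary flatness of $L$, and the resulting vanishing of the obstruction to first-order linearization of the neighbourhood, must enter.
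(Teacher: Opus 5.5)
\textbf{Verdict: genuine gap.} Your reduction is correct and matches the paper's proof. For the normal exponential map, $\Phi^*J_M=J_L$ on $\mathbf 0_L$. The only components of $A=\Phi^*J_M-J_L$ for which first-order vanishing is not enough are the tangential-to-normal ones ($dz\otimes\partial_w$, $dz\otimes\partial_{\bar w}$ and their conjugates), and these must be $O(|w|^2)$. But that second-order vanishing is the entire content of the proposition, and you leave it open.

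\textbf{Problems with route (i) and your closing diagnosis.} Route (i) is not an argument, and its inputs are inaccurate. For a complex submanifold of a K\"ahler manifold it is the mixed $(1,1)$ part of the second fundamental form that vanishes, not the $(2,0)$ part, and the $(2,0)$ part is not holomorphic in general. Your closing diagnosis also points to the wrong mechanism. The obstruction to linearizing the first-order neighbourhood is the extension class of $0\to T_D\to T_M|_D\to N_D\to 0$ in $H^1(D,T_D\otimes N_D^*)$. Flatness of $L$ does not control this class, and the estimate does not need it to vanish. It only enters through the $w$-linear part of the $d\bar z\otimes\partial_z$ term of $A$, which is already $O(e^{-t})$ cylindrically. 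For the same reason, your claim in (ii) that holomorphic charts patch ``to first order'' is unjustified in the $z$-variables. As you observe yourself, it is not needed there.

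\textbf{What is actually needed.} The two inputs are integrability and the fact that the flat frame $s$ is a \emph{holomorphic} frame of $N_{D/M}$. Flatness only serves to make $|u|=e^{-t}$ for the fibre coordinate $u$. The paper expands $J\partial_{z_\alpha}=i\partial_{z_\alpha}+(P^1_\alpha w+P^{\bar 1}_\alpha\bar w)\partial_{\bar w}+\cdots$ and kills each dangerous first-order term in turn:
\begin{enumerate}[label=(\alph*)]
\item $J^2=-\mathrm{Id}$ makes the first-order part of $A$ anticommute with $J_L$, which kills the $dz\otimes\partial_w$ term.
\item Integrability, via $[Z_1,Z_\alpha]\in T^{1,0}_J$, kills $P^1_\alpha$.
\item Since $d\Phi|_D$ induces the identity $L\to N_{D/M}$, $s$ is holomorphic in the normal bundle of $D$ in $(L,\Phi^*J_M)$, which kills $P^{\bar 1}_\alpha$.
\end{enumerate}

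\textbf{How to close your route (ii).} It can be finished in a few lines, with no patching and no appeal to (i).
\begin{enumerate}[label=(\arabic*)]
\item On a chart, pick $J_M$-holomorphic coordinates $(z,w)$ with $D=\{w=0\}$ and $dw(s)=1$ along $D$. To do this, divide any defining function $w'$ by the holomorphic function $dw'(s)$; this is exactly where holomorphicity of $s$ enters.
\item Along $\mathbf 0_L$ we have $d\Phi(\partial_u)=\hat s$, where $\hat s$ is the orthogonal $(1,0)$-lift of $s$. Taylor expansion then gives $w\circ\Phi=u+O(|u|^2)$ and $z\circ\Phi=z+O(|u|)$, with smooth remainders.
\item Set $\phi=(z,w)\circ\Phi$ and let $J_0$ be the standard structure. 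Then $\Phi^*J_M-J_L=(d\phi)^{-1}[J_0,d\phi]$.
\item Your own weight count gives $d\phi-\mathrm{Id}=O'(e^{-t})$, which proves the proposition.
\end{enumerate}
If $dw(s)$ were merely smooth, $\partial_{\bar z}(w\circ\Phi)$ would contain the term $\partial_{\bar z}(dw(s))\,u$. That is an anti-linear term of cylindrical size $O(1)$, and the estimate would fail.
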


\begin{proof}
For any $\xi\in L\simeq T^{1,0}M|_D/T^{1,0}D$, choose a unique tangent vector representative $v\in T_{\pi(\xi)}^{1,0}M$ in the orthogonal complement of $T_{\pi(\xi)}^{1,0}D$ with respect to $g_M$. Define the normal exponential 
$$
    \Phi(\xi)=\exp_{\pi(\xi),g_M}(2\Real v).
$$
The restriction $d\Phi|_D:TL|_{\mathbf 0_L}\simeq TD\oplus L\longrightarrow TM|_D$ is given by the above identification. Thus for $|\xi|_h<\epsilon$, with $\epsilon>0$ sufficiently small, we obtain a diffeomorphism $\Phi:\{0<|\xi|_h<\epsilon\}\longrightarrow X\setminus K$ for a compact subset $K\subset X$.

Denote $J:=\Phi^*J_M$, then the error $E:=J-J_L$ is a smooth tensor on a neighborhood of $\mathbf 0_L$ in $L$ and vanishes on the zero section $\mathbf 0_L$. 
Take a simply connected holomorphic coordinate chart $U\subset D$ with coordinates $w_2,\ldots,w_n$ and a flat unitary holomorphic frame $s$ of $L|_U$ then for any $\xi\in L|_U$, write $\xi=w_1s$, $|w_1|=e^{-t}$. 
Write $E$ in the local expansion 
$$
    E = E_\alpha^\beta\, dw^\alpha\otimes\p_{w_\beta}
$$
where $E^\beta_\alpha$ are smooth functions in a neighborhood $V$ of $U$ in $L$, and vanish on $\mathbf 0_L$, thus
$$
    |\nabla^k E_\alpha^\beta|_{g_{\mathcal C,\sigma}} = O(e^{-t}).
$$
However, by straightforward computation we have 
$$
    |\nabla^k \p_{w_1}|_{g_{\mathcal C,\sigma}} = O(e^t),\quad |\nabla^k dw_1|_{g_{\mathcal C,\sigma}} = O(e^{-t})
$$ 
and the tangential frames have bounded covariant derivatives. 
Thus we need to prove the terms $|E_\alpha^\beta|$ for tangential $\alpha\in \{2,\ldots,n, \bar 2,\ldots, \bar n\}$ and normal $\beta\in \{1,\bar 1\}$ are $O(|w_1|^2)$.

We first show that $E_\alpha^1 = O(|w_1|^2)$ for $\alpha\in \{2,\ldots, n\}$. Since $J^2 = -\mathrm{Id}$, we know that $E$ anti-commutes with $J_L$ up to $O(|w_1|^2)$, thus The first-order term of $E$ is anti-holomorphic along $D$ which gives $E_\alpha^1 = O(|w_1|^2)$. 

Next we show $E_{\alpha}^{\bar 1} = O(|w_1|^2)$. For $\alpha,\beta \in \{ 2,\dots,n\}$, we may write
$$
    J\partial_{w_\alpha} = i\partial_{w_\alpha}+(P^1_\alpha w_1+P^{\bar 1}_\alpha \bar w_1)\partial_{\bar w_1}+(Q_{\alpha}^{\bar\beta1}w_1+Q_{\alpha}^{\bar \beta\bar 1}\bar w_1)\,\partial_{\bar w_\beta}+O(|w_1|^2),
$$
where $P$ and $Q$ are smooth functions depending on $(w_2,\ldots,w_n)$, and the $O(|w_1|^2)$ term means a smooth tensor with coefficient in $O(|w_1|^2)$ when written in the ordinary smooth frame. 
Consider $J$-holomorphic vectors $Z_\alpha: = \partial_{w_\alpha}-i J\partial_{w_\alpha}$. Since $J$ is integrable, $[Z_1,Z_\alpha]\in T^{1,0}_{J}L$, so 
$$
    [Z_1,Z_\alpha]=-2iP^1_\alpha\,\partial_{\bar w_1}-2iQ_{\alpha}^{\bar\beta1}\,\partial_{\bar w_\beta}+O(|w_1|).
$$
Thus $P^1_\alpha = Q_{\alpha}^{\bar \beta1} = 0$. 

On the other hand we have 
$$
    [\p_{w_1},J\p_{{\bar w}_{\alpha}}] = P^{1}_{\bar \alpha} \partial_{ w_1} + Q_{\bar \alpha}^{\beta 1}\,\partial_{w_{\beta}}+O(|w_1|).
$$

Under the identification of $d\Phi|_D$ we can view $\p_{w_1}|_D$ as a smooth section of $T^{1,0}M|_D$ whose image in the normal quotient $L$ is the holomorphic frame $s$. 
Notice that $L$ is precisely the holomorphic normal bundle of $D$ in $M$. This implies that on $D$, the $d\bar w_\alpha$ component, for $\alpha\in\{2,\cdots, n\}$ of $\bp_{J}(\p_{w_1}|_D)$ is tangential to $D$. 
More precisely, let $\pi_L: T^{1,0}M|_D\to L$ be the quotient map, we have 
$$
    \pi_L([\p_{w_1},J\p_{\bar w_{\alpha}}]|_D)
    =2i\,\pi_L((\bp_J(\p_{w_1}|_D))(\p_{\bar w_{\alpha}})) = 2i\bp_L(\pi_L(\p_{w_1}|_D))(\p_{\bar w_{\alpha}}) =0 
$$
Thus $P^{\bar 1}_\alpha = 0$, so $E^{\bar 1}_\alpha = O(|w_1|^2)$. The same estimates hold for $\alpha\in \{\bar 2,\ldots, \bar n\}$. Combining this with the estimates of the smooth frame we get
$$
    \Phi^*J_M-J_L=O'(e^{-t}).
$$
\end{proof}

The closeness of holomorphic volume form follows from the proof in \cite[Proposition~3.4(b)]{HSVZ2022} and also could be seen as a consequence of the closeness of complex structure together with the normalization of $\Omega_X$ and $\Omega_{\mathcal C}$ by fixing the same residue along $D$.

\begin{proposition}\cite[Proposition~3.4(b)]{HSVZ2022}\label{prop:volume-form-asymptotics}
Under the normal exponential map $\Phi$ of Proposition~\ref{prop:Acyl_asym}, the holomorphic volume forms satisfy
$$
    \Phi^*\Omega_X - \Omega_{\mathcal C}=O'(e^{-t}).
$$
\end{proposition}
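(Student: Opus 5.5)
The plan is to compare the two forms in the local holomorphic coordinates $(w_1,w_2,\ldots,w_n)$ used in the proof of Proposition~\ref{prop:Acyl_asym}, with $\xi=w_1 s$ for a flat unitary frame $s$. In these coordinates $\Omega_{\mathcal C}=\frac{dw_1}{w_1}\wedge\pi^*\Omega_D$ and $\Phi^*\Omega_X=\frac{f}{w_1}\,\Phi^*(\ldots)$. Two facts are available. First, $\Phi^*\Omega_X$ is a $(n,0)$-form for $J=\Phi^*J_M$. Second, it is $d$-closed on $\mathcal C$, and near $\mathbf 0_L$ it is $w_1^{-1}$ times a smooth form, since $\Omega_X=S^{-1}$ has a simple pole along $D$ and $\Phi$ is smooth up to the zero section.

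\textbf{Step 1.} Write $\Phi^*\Omega_X=\frac{dw_1}{w_1}\wedge A+\frac{1}{w_1}B$. Here $A$ and $B$ are smooth forms on a neighborhood $V$ of $U$ in $L$; $A$ is an $(n-1)$-form and $B$ is an $n$-form not containing $dw_1$. The residue normalization $\mathrm{Res}_D\Omega_X=\Omega_D$ gives $A|_{\mathbf 0_L}=\Omega_D$ after restriction to $TD$. The fact that $d\Phi|_{\mathbf 0_L}$ is the identification $TD\oplus L$ means the leading coefficient is exactly $\frac{dw_1}{w_1}\wedge\pi^*\Omega_D$. The task is then to show that the difference $\Phi^*\Omega_X-\Omega_{\mathcal C}$ has all coefficients, in the frame $\{dw_\alpha,d\bar w_\alpha\}$, of size $O(|w_1|^2)/|w_1|=O(|w_1|)$ when they multiply $dw_1/w_1$ or $d\bar w_1/\bar w_1$. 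Tangential components need one extra factor, since $|dw_1|_{g_{\mathcal C}}\sim e^{-t}$ while $|dw_1/w_1|$ is bounded. Given this, the frame estimates from the proof of Proposition~\ref{prop:Acyl_asym} yield $O'(e^{-t})$ for all derivatives.

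\textbf{Step 2.} The main obstacle is the $\frac1{w_1}B$ term and the first-order terms of $A$. A priori these are only $O(1)$, resp. $O(|w_1|)$, in the smooth frame, and after dividing by $w_1$ they could give $O(1)$ errors measured in $g_{\mathcal C}$. I would handle this using the type condition. Since $\Phi^*J-J_L=O'(e^{-t})$ with the refined vanishing $E^\beta_\alpha=O(|w_1|^2)$ for mixed tangential/normal indices established in Proposition~\ref{prop:Acyl_asym}, the $J$-$(n,0)$ condition forces the $d\bar w$-components of $\Phi^*\Omega_X$ to be controlled by $E$ times the form itself. This gives the required $O(e^{-t})$ bound on all non-$(n,0)_{J_L}$ parts in $g_{\mathcal C}$-norm.

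For the $(n,0)_{J_L}$ part, write it as $\frac{g}{w_1}\,dw_1\wedge dw_2\wedge\cdots\wedge dw_n$ plus terms $\frac{1}{w_1}b\,dw_2\wedge\cdots\wedge dw_n$. The coefficient $b$ appears without $dw_1$, so it is measured with weight $|w_1|^{-1}$. Closedness, $d\Phi^*\Omega_X=0$, together with the approximate holomorphicity shows that $\bar\partial_{J_L}(g/w_1)=O(|w_1|)$ and $\bar\partial b=O(|w_1|)$. A simple-pole meromorphic structure then forces $b$ to vanish at $\mathbf 0_L$ to first order: $b=O(|w_1|^2)$ up to holomorphic corrections, but a holomorphic $b$ with $b|_{\mathbf 0_L}=0$ contributing $b/w_1$ would contradict the residue along $D$ being $\Omega_D$ only. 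It also forces $g=g_0(w')+O(|w_1|)$ with $g_0\,dw'=\Omega_D$. Hence $\frac{g}{w_1}dw_1\wedge dw'-\Omega_{\mathcal C}=O(1)\,dw_1\wedge dw'$, which is $O(e^{-t})$ in $g_{\mathcal C}$.

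\textbf{Step 3.} Collecting the components and differentiating in the ordinary smooth frame gives all higher derivative bounds, so $\Phi^*\Omega_X-\Omega_{\mathcal C}=O'(e^{-t})$. Alternatively, Step 2 can be bypassed exactly as in \cite[Proposition~3.4(b)]{HSVZ2022}. In that route one writes $\Omega_X=\frac{dz_1}{z_1}\wedge\Omega'$ in genuine holomorphic coordinates on $M$ adapted to $D$, where the defining function is $z_1=w_1+O(|w_1|^2)$ by Proposition~\ref{prop:Acyl_asym}. One then compares term by term, and the simple-pole structure makes the estimate immediate.
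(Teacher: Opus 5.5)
Your Step 3 fallback is the paper's route (the paper just invokes \cite[Proposition~3.4(b)]{HSVZ2022}) and is sound, but your main argument in Step 2 has a genuine gap.

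First, the decomposition is wrong. An $n$-form has exactly one $(n,0)_{J_L}$ component, the multiple of $dw_1\wedge dw_2\wedge\cdots\wedge dw_n$. The term $\frac{1}{w_1}b\,dw_2\wedge\cdots\wedge dw_n$ is an $(n-1)$-form and cannot occur. Since $B$ contains no $dw_1$, it is entirely of non-$(n,0)_{J_L}$ type, so your type argument already disposes of it. That needs an absorption step, because $|\Phi^*\Omega_X|_{g_{\mathcal C}}$ is not bounded a priori: the non-$(n,0)_{J_L}$ part is at most $C|E|_{g_{\mathcal C}}|\Phi^*\Omega_X|_{g_{\mathcal C}}$ and $|E|_{g_{\mathcal C}}\to0$.

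Second, the residue argument for $b$ is invalid in any case. A holomorphic $b$ vanishing on $\mathbf 0_L$ makes $b/w_1$ holomorphic with no residue, so nothing is contradicted. Closedness plays no role either.

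What actually has to be shown is $g-g_0=O'(e^{-t})$ for the coefficient of $\frac{dw_1}{w_1}\wedge dw_2\wedge\cdots\wedge dw_n$. Your only support for this is the Step 1 claim that $w_1\Phi^*\Omega_X$ is smooth, and the reason you give does not yield it. The simple pole lies along $\zeta:=\Phi^*z_1$, not along $w_1$, so you only get $w_1\Phi^*\Omega_X=(w_1/\zeta)\cdot(\text{smooth})$. Smoothness of $w_1/\zeta$ across $\mathbf 0_L$ does not follow from smoothness of $\Phi$ and its $1$-jet there: a local change $w_1\mapsto w_1+\bar w_1^2$ preserves both and produces $\bar w_1^2/w_1$. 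Your Step 3 derivative bounds, obtained by differentiating in the ordinary smooth frame, rest on the same unproved smoothness.

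The fix is to work with $\zeta$ in cylindrical terms, which is exactly the HSVZ route of your Step 3. The paper also mentions deriving the estimate from $\Phi^*J_M-J_L=O'(e^{-t})$ plus the residue normalization. Your type argument is a correct half of that, but the $(n,0)_{J_L}$ coefficient still needs the computation below. This computation makes Steps 1--2 and the refined bounds $E^\beta_\alpha=O(|w_1|^2)$ unnecessary.

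\begin{enumerate}
\item Write $\Omega_X=\frac{h}{z_1}dz_1\wedge dz_2\wedge\cdots\wedge dz_n$. Because $d\Phi$ along $\mathbf 0_L$ is the complex-linear identification $TD\oplus L\to TM|_D$, one has $d\zeta|_{\mathbf 0_L}=a\,dw_1$ with $a$ holomorphic and nonvanishing, and no $d\bar w_1$ term. This is the essential point: a $\bar w_1$-linear term in $\zeta$ would leave an $O(1)$ error in $d\zeta/\zeta$.
\item Rescale $z_1$ by a holomorphic extension of $a^{-1}$ in $z_2,\ldots,z_n$ to get $a\equiv1$. This changes $\frac{dz_1}{z_1}$ only by a combination of $dz_2,\ldots,dz_n$, which dies after wedging.
\item Then $\zeta=w_1(1+\epsilon)$ with $\epsilon=O(|w_1|^2)/w_1=O'(e^{-t})$, so $\frac{d\zeta}{\zeta}=\frac{dw_1}{w_1}+O'(e^{-t})$.
\item Since $\Phi|_{\mathbf 0_L}$ is the holomorphic inclusion of $D$, also $\Phi^*dz_\alpha=\pi^*(dz_\alpha|_D)+O'(e^{-t})$ and $\Phi^*h=\pi^*(h|_D)+O'(e^{-t})$.
\item Multiply these bounded factors and use $h|_D\,dz_2\wedge\cdots\wedge dz_n|_D=\mathrm{Res}_D\Omega_X=\Omega_D$. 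This gives $\Phi^*\Omega_X=\frac{dw_1}{w_1}\wedge\pi^*\Omega_D+O'(e^{-t})=\Omega_{\mathcal C}+O'(e^{-t})$.
\end{enumerate}
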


\begin{lemma}\label{cor:ddc-decay}
Let $t_X:=-\log|S|_{h_M}$ on $X$. It is close to the model potential exponentially, i.e. $\Phi^*t_X-t = O'(e^{-t})$. Moreover, $\Phi^*(\ddc_{J_X}t_X) = O'(e^{-t})$.
\end{lemma}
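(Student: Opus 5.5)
Let $\tilde h$ be the smooth Hermitian metric on $L$ obtained by pulling back $h_M$ along $\Phi$; on the zero section it agrees with $h$. Write $t_X=-\log|S|_{h_M}$ and compare the two sections $\Phi^*S$ and the tautological section $\xi$ of $\pi^*L$.

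\emph{Step 1: the defining section.} By construction $d\Phi|_D$ identifies the normal bundle with $L$. Since $S$ vanishes to first order along $D$, in a local chart $(w_1,w_2,\dots,w_n)$ as in the proof of Proposition~\ref{prop:Acyl_asym} we have $\Phi^*S=f\,w_1$ with $f$ smooth. The normalization $\mathrm{Res}_D\Omega_X=\Omega_D$ together with the construction of $\Omega_{\mathcal C}$ (whose residue is also $\Omega_D$) pins down $f|_{D}$ in the unitary frame: $|f|_{\tilde h}=1+O(|w_1|)$ on $D$ after the identification $[D]|_D=L$ with $h_M|_D=h$. Hence
$$
\Phi^*t_X=-\log|w_1|-\log|f|_{\tilde h}=t+u,\qquad u:=-\log|f|_{\tilde h},
$$
where $u$ is a smooth function on a neighbourhood of $\mathbf 0_L$ (in the ordinary smooth structure) vanishing on $\mathbf 0_L$. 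As in the proof of Proposition~\ref{prop:Acyl_asym}, any such function satisfies $|\nabla^k u|_{g_{\mathcal C,\sigma}}=O(e^{-t})$. This is because derivatives in $\partial_{w_1}$ directions, which have length $O(e^{-t})$ in $g_{\mathcal C,\sigma}$ (dual to $|dw_1|\sim e^{-t}$), only gain factors $|w_1|$, while tangential derivatives keep the $O(|w_1|)$ vanishing. This gives $\Phi^*t_X-t=O'(e^{-t})$. One caveat is that I would double-check that the ambiguity of $f$ up to $O(|w_1|)$ corrections is all that is needed; no second-order vanishing is required here.

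\emph{Step 2: the curvature term.} On $X$ we have $\ddc_{J_X}t_X=\tfrac12\Theta_{h_M}$ up to sign, i.e.\ $\ddc t_X$ equals the curvature form of $h_M$ restricted to $X$. This is a smooth closed $(1,1)$-form on all of $M$. Its restriction to $D$ is the curvature of $h$, which is zero since $h$ is flat. So $\Phi^*\Theta_{h_M}$ is a smooth $2$-form on a neighbourhood of $\mathbf 0_L$. Its purely tangential components vanish on $\mathbf 0_L$, hence are $O(|w_1|)$. Its components involving $dw_1$ or $d\bar w_1$ carry a factor of size $O(e^{-t})$ in $g_{\mathcal C,\sigma}$. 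Therefore all components are $O(e^{-t})$, and the same bookkeeping for covariant derivatives as in Proposition~\ref{prop:Acyl_asym} gives $O'(e^{-t})$.

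\emph{Main obstacle.} The delicate point is the frame bookkeeping for mixed components in Step 2. A term like $c\,dw_1\wedge d\bar w_\alpha$ with $c$ not vanishing on $D$ is still only $O(e^{-t})$, which is fine. A term $c\,dw_1\wedge d\bar w_1$ is $O(e^{-2t})$. So the argument goes through without any cancellation beyond flatness of $h$. I would verify carefully that covariant derivatives of $dw_1$ with respect to $g_{\mathcal C,\sigma}$ remain $O(e^{-t})$, as already asserted in the proof of Proposition~\ref{prop:Acyl_asym}. Alternatively, one can write $\ddc_{J_X}t_X=\ddc_{J_L}(t+u)+(\text{error from }J_X-J_L)$. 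Then $\ddc_{J_L}t=0$, $\ddc_{J_L}u=O'(e^{-t})$, and the error is controlled by Proposition~\ref{prop:Acyl_asym} applied to $d(J\,dt)$. This requires $dt$ to be bounded, which it is.
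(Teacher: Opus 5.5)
Your proposal is correct and is essentially the paper's proof. Step 1 is the expansion $\Phi^*|S|^2_{h_M}=|w_1|^2(1+O'(e^{-t}))$ coming from the normal identification $d\Phi|_D$ and $h_M|_D=h$, and Step 2 is exactly the paper's argument: $\iota^*\Theta_{h_M}=0$, so the tangential components vanish on $\mathbf 0_L$, while components containing $dw_1$ or $d\bar w_1$ are already $O'(e^{-t})$.

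There are two small slips. First, $\partial_{w_1}$ has $g_{\mathcal C,\sigma}$-length of order $e^{t}$, not $e^{-t}$; it is the unit-size field $w_1\partial_{w_1}$ that preserves functions vanishing on $\mathbf 0_L$, which is what your bookkeeping actually uses. Second, $|f|=1$ on $D$ comes from $dS|_D$ identifying $N_D\cong[D]|_D$ together with $h_M|_D=h$, not from the residue normalization.
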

\begin{proof}
Since $d\Phi$ induces this identification along the zero section, we have 
$$
    \Phi^*|S|_{h_M}^2= |w_1|^2(1+O'(e^{-t}))
$$
Hence 
$$
    \Phi^*t_X-t = -\frac12\log \bigl( \Phi^*|S|_{h_M}^2\bigr) + \frac12\log|w_1|^2 = O'(e^{-t}).
$$
Write $\ii\Theta_{h_M}$ for the Chern curvature form of $h_M$, so that $\ddc_{J_X}t_X=\tfrac12\ii\Theta_{h_M}|_X$. The restriction $h_M|_D=h$ is unitary flat, so $\iota^*\Theta_{h_M}=0$. Thus $\Phi^*\Theta_{h_M}$ is a smooth two-form whose purely tangential restriction to the zero section vanishes. Any component containing a normal 1-form is already $O'(e^{-t})$, while the tangential components have coefficients vanishing on the zero section. It follows that $\Phi^*(\ddc_{J_X}t_X) = \tfrac12\ii\Phi^*\Theta_{h_M}=O'(e^{-t})$.
\end{proof}

For any $\sigma>0$, we choose a gluing convex smooth function $q$ on $\mathbb{R}^+$ such that $q = 0$ when $s \leq S$ and $q(s) = \sigma s^2 + c_1s+ c_2$ when $s \geq S+1$. By pushing $S$ sufficiently large and suitable choice of $c_1,c_2$, we have
$$
    \omega_{X,\sigma}:=\omega_M|_X+\ddc_{J_X}q(t_X)
$$
defines a K\"ahler form on $X$, with associated metric $g_{X,\sigma}$. 
Combining Proposition~\ref{prop:Acyl_asym}, Lemma~\ref{cor:ddc-decay}, and $\Phi^*\omega_M-\pi^*\omega_D = O'(e^{-t})$ we have the closeness of initial K\"ahler metrics:

\begin{corollary}\label{cor:metric_asym}
The complete K\"ahler manifold $(X, g_{X,\sigma})$ is asymptotically cylindrical. More precisely, we have
$$
    \Phi^*\omega_{X,\sigma}-\omega_{\mathcal C,\sigma}=O'(te^{-t}).
$$
Let $c_\sigma$ be the model normalization from Section~\ref{sec:cplx-model}. The Ricci potential of $\omega_{X,\sigma}$ satisfies
$$
    f_{X,\sigma}:=\log\frac{c_\sigma\,\Omega_X\wedge\bar\Omega_X}{\omega_{X,\sigma}^n}=O'(te^{-t}).
$$
\end{corollary}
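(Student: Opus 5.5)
We prove the corollary by expanding $\Phi^*\omega_{X,\sigma}$ term by term on $\{t\ge S+1\}$, where $q(t_X)=\sigma t_X^2+c_1t_X+c_2$. There
$$
\ddc_{J_X}q(t_X)=(2\sigma t_X+c_1)\,\ddc_{J_X}t_X+2\sigma\,\ii\,\p_{J_X}t_X\wedge\bp_{J_X}t_X .
$$
We compare each piece with the corresponding model quantity. The model identity is $\ddc_{J_L}t^2=2\ii\p_{J_L}t\wedge\bp_{J_L}t=dt\wedge\eta$, using $\ddc_{J_L}t=0$.

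\textbf{The Kähler form.} We treat three terms.
\begin{enumerate}[label=(\roman*)]
\item By Lemma~\ref{cor:ddc-decay}, $\Phi^*\ddc_{J_X}t_X=O'(e^{-t})$. Since $\Phi^*t_X=t+O'(e^{-t})$, the coefficient $(2\sigma\Phi^*t_X+c_1)$ is $O'(t)$. Hence the first term is $O'(te^{-t})$, and this is where the factor $t$ comes from.
\item For the gradient term, write $2\ii\,\p_Jt\wedge\bp_Jt=dt\wedge d^c_Jt$ up to the paper's normalisation, with $d^c_J=-J^*d$ in the appropriate convention. Then
$$
\Phi^*(d^c_{J_X}t_X)=d^c_{\Phi^*J_X}(\Phi^*t_X).
$$
Write $\Phi^*J_X=J_L+O'(e^{-t})$ by Proposition~\ref{prop:Acyl_asym} and $\Phi^*t_X=t+O'(e^{-t})$ by Lemma~\ref{cor:ddc-decay}. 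Since $dt$ and $\eta$ are bounded with all derivatives in $g_{\mathcal C,\sigma}$, bilinearity gives $\Phi^*(dt_X\wedge d^c_{J_X}t_X)=dt\wedge\eta+O'(e^{-t})$. Here we use that products of $O'(1)$ and $O'(e^{-t})$ tensors are $O'(e^{-t})$, because the model has bounded geometry.
\item For $\omega_M$ itself, we show $\Phi^*\omega_M-\pi^*\omega_D=O'(e^{-t})$ with the same frame argument as in Proposition~\ref{prop:Acyl_asym}. Components containing $dw_1$ or $d\bar w_1$ already carry a factor $e^{-t}$. The purely tangential components agree with $\pi^*\omega_D$ on the zero section, since $\iota^*\omega_M=\omega_D$, so their difference vanishes there and is $O(|w_1|)$.
\end{enumerate}
Summing (i)–(iii) gives $\Phi^*\omega_{X,\sigma}-\omega_{\mathcal C,\sigma}=O'(te^{-t})$. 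The metrics are determined by the form together with the complex structure, and the complex structures differ by $O'(e^{-t})$. It follows that $\Phi^*g_{X,\sigma}-g_{\mathcal C,\sigma}=O'(te^{-t})$, which is $\le C e^{-\delta t}$ for any $\delta<1$. Completeness follows because $g_{X,\sigma}$ is uniformly equivalent to the complete cylinder on the end. So $(X,g_{X,\sigma})$ is ACyl in the sense of Definition~\ref{def:riemannian-acyl}.

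\textbf{The Ricci potential.} We write
$$
f_{X,\sigma}=\log\frac{c_\sigma\,\Omega_X\wedge\bar\Omega_X}{\omega_{X,\sigma}^n}.
$$
Pull back by $\Phi$. Proposition~\ref{prop:volume-form-asymptotics} gives $\Phi^*(\Omega_X\wedge\bar\Omega_X)=\Omega_{\mathcal C}\wedge\bar\Omega_{\mathcal C}+O'(e^{-t})$, and the first part gives $\Phi^*\omega_{X,\sigma}^n=\omega_{\mathcal C,\sigma}^n+O'(te^{-t})$. The model top forms satisfy $\omega_{\mathcal C,\sigma}^n=c_\sigma\Omega_{\mathcal C}\wedge\bar\Omega_{\mathcal C}$, and this volume form is parallel, so it has norm bounded above and below with all derivatives. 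Hence the ratio inside the logarithm is $1+O'(te^{-t})$, and taking $\log$ preserves this class. This gives $f_{X,\sigma}=O'(te^{-t})$.

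\textbf{Main obstacle.} The only delicate point is (iii) together with the bookkeeping in (ii): making sure that every error is $O'$ in the model metric and not merely in the smooth frame. This rests on the frame estimates $|\nabla^k dw_1|=O(e^{-t})$ and $|\nabla^k\p_{w_1}|=O(e^{-t}\cdot e^{2t})$ used in Proposition~\ref{prop:Acyl_asym}. Error terms involving $\p_{w_1}$ must have coefficients $O(|w_1|^2)$, and I would verify this exactly as in that proof.
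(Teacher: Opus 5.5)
Your proposal is correct and follows the paper's argument, which combines Proposition~\ref{prop:Acyl_asym}, Lemma~\ref{cor:ddc-decay} and the estimate $\Phi^*\omega_M-\pi^*\omega_D=O'(e^{-t})$, then Proposition~\ref{prop:volume-form-asymptotics} for the Ricci potential. You make explicit what the paper leaves unstated: the expansion of $\ddc_{J_X}q(t_X)$ that produces the factor $t$, and the frame argument for $\omega_M$; your worry about $\p_{w_1}$-coefficients does not arise here, since only forms, hence only $dw_1,d\bar w_1$, enter.
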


\subsection{Modification by a linear potential at infinity}\label{sec:perturb_existence}
In this subsection we omit $\Phi^*$ and fix $\sigma>0$. We write $\omega_\mathcal C$ for its cylindrical model and let $c$ denote $c_\sigma$. We will modify the potential by a term linear in $t_X$ at infinity.
We start with $\omega_X: = \omega_{X,\sigma}$ and $g_X:=g_{X,\sigma}$. 
Since the form $\omega_X+2A\,\ddc_{J_X} t_X$ is not necessarily K\"ahler for arbitrary $A$, we use a slowly varying cutoff. Let $\chi_{R,T}:\mathbb R\to\mathbb R$ be a smooth convex function satisfying
\begin{align*}
    \chi_{R,T}(t)=0 \text{ for } t\leq R,&\qquad \chi_{R,T}(t)=t-c_{R,T}\text{ for } t\geq R+T,\\
    0\leq \chi_{R,T}'\leq 1,&\qquad 0\leq \chi_{R,T}''\leq \frac{C}{T},
\end{align*}
where $c_{R,T}$ is a constant depending on $R,T$ and $C$ is independent of $R,T\geq 1$. For $A\in\mathbb R$, define
$$
    \omega_{A,R,T}:=\omega_X+2A\,\ddc_{J_X}\chi_{R,T}(t_X).
$$

\begin{lemma}\label{lem:kahler-cutoff-perturbation}
For every fixed $A\in\mathbb R$, there exist $R,T$ large enough such that $\omega_{A,R,T}$ is a K\"ahler form on $X$. Moreover, $\omega_{A,R,T}$ is asymptotic to the same cylindrical model $\omega_\mathcal C$.
\end{lemma}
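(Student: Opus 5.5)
The plan is to expand $\ddc_{J_X}\chi_{R,T}(t_X)$ by the chain rule and compare each term with $\omega_X$ on the support of $\chi_{R,T}$. Everywhere on $X$ we have
$$
    \ddc_{J_X}\chi_{R,T}(t_X)=\chi_{R,T}'(t_X)\,\ddc_{J_X}t_X+\chi_{R,T}''(t_X)\,\ii\,\partial_{J_X} t_X\wedge\bar\partial_{J_X} t_X .
$$
Since $\chi_{R,T}$ vanishes for $t\le R$, the form $\omega_{A,R,T}$ agrees with $\omega_X$ on $\{t_X\le R\}$. Hence it suffices to check positivity on $\{t_X\ge R\}$, and for $R$ large this region lies inside the cylindrical end, where Corollary~\ref{cor:metric_asym} and Lemma~\ref{cor:ddc-decay} are available.

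For the first term, Lemma~\ref{cor:ddc-decay} gives $\ddc_{J_X}t_X=O'(e^{-t})$ with respect to $g_{\mathcal C}$. Corollary~\ref{cor:metric_asym} shows that $\omega_X$ is uniformly equivalent to $\omega_{\mathcal C}$ on the end. Using $0\le\chi'\le1$, we get $|2A\chi'\ddc t_X|_{g_X}\le C|A|e^{-R}$ on $\{t_X\ge R\}$. This is smaller than $\tfrac12$ once $R$ is large, depending on $A$.

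The second term needs more care, because $\ii\,\partial t_X\wedge\bar\partial t_X$ is not small: it is comparable to $\tfrac12 dt\wedge\eta$, which is a bounded fraction of $\omega_{\mathcal C}$. When $A\ge0$ it is semipositive, since $\chi''\ge0$, so it only helps. When $A<0$, I will use $\chi''\le C/T$ together with $|\ii\,\partial t_X\wedge\bar\partial t_X|_{g_X}\le C'$, which again follows from the asymptotics of $t_X$ and $\omega_X$. This bounds the term by $2|A|CC'/T$, and choosing $T$ large in terms of $A$ makes it less than $\tfrac14$. I expect this to be the only real point of the argument, and it is exactly why the slowly varying cutoff appears. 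Combining the two estimates gives $\omega_{A,R,T}\ge\tfrac14\omega_X>0$, so $\omega_{A,R,T}$ is Kähler. It is closed because it differs from $\omega_X$ by $\ddc$ of a smooth function.

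For the asymptotics, on $\{t_X\ge R+T\}$ we have $\chi_{R,T}(t_X)=t_X-c_{R,T}$. There $\omega_{A,R,T}=\omega_X+2A\,\ddc_{J_X}t_X$, and Lemma~\ref{cor:ddc-decay} gives $2A\,\ddc_{J_X}t_X=O'(e^{-t})$. Combined with Corollary~\ref{cor:metric_asym}, this yields $\Phi^*\omega_{A,R,T}-\omega_{\mathcal C}=O'(te^{-t})$. So $\omega_{A,R,T}$ is asymptotic to the same model $\omega_{\mathcal C}$; the decay $O'(te^{-t})$ implies decay in $C^\infty_\delta$ for any $\delta<1$. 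Completeness follows from uniform equivalence with $\omega_X$.
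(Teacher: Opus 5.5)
Your proof is correct and follows essentially the same route as the paper. Both use the chain-rule expansion of $\ddc_{J_X}\chi_{R,T}(t_X)$ and bound the perturbation by $C|A|(e^{-R}+T^{-1})$, choosing first $R$ and then $T$ large, and both use the tail identity $\omega_{A,R,T}-\omega_X=2A\,\ddc_{J_X}t_X=O'(e^{-t})$ together with Corollary~\ref{cor:metric_asym}. Your extra observation that the $\chi''$ term is semipositive when $A\geq 0$ is a harmless refinement that the paper does not need.
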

\begin{proof}
On the transition region $R\leq t_X\leq R+T$,
$$
    \ddc_{J_X}\chi_{R,T}(t_X)=\chi_{R,T}'(t_X)\,\ddc_{J_X} t_X+\chi_{R,T}''(t_X)\,dt_X\wedge\dc_{J_X} t_X .
$$
Hence
$$
    |\ddc_{J_X}\chi_{R,T}(t_X)|_{g_X}\leq C(e^{-R}+T^{-1})
$$
on the transition region. On $\{t_X\leq R\}$, the perturbation vanishes. On the tail $\{t_X\geq R+T\}$ we have $\ddc_{J_X} t_X=O'(e^{-t})$. Therefore, for the fixed value of $A$, by choosing first $R$ large and then $T$ large, we can make $2A\,\ddc_{J_X}\chi_{R,T}(t_X)$ arbitrarily small in $C^0$-norm with respect to $g_X$. Since $\omega_X$ is K\"ahler, this implies that $\omega_{A,R,T}$ is K\"ahler. Finally, on the tail, $\omega_{A,R,T}-\omega_X=2A\,\ddc_{J_X} t_X=O'(e^{-t})$, while $\omega_X-\omega_\mathcal C=O'(te^{-t})$. Thus $\omega_{A,R,T}$ has the same cylindrical model $\omega_\mathcal C$.
\end{proof}

\begin{lemma}\label{lem:integral-condition}
There exists a unique $A\in\mathbb R$ such that, for any choice of $R,T$ for which $\omega_{A,R,T}$ is K\"ahler,
$$
    \int_X(\omega_{A,R,T}^n-c\,\Omega_X\wedge\bar\Omega_X)=0 .
$$
with $c$ normalized by $\omega_\mathcal C^n = c\,\Omega_\mathcal{C}\wedge\bar\Omega_\mathcal{C}$.
\end{lemma}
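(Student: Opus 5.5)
The plan is to show that the integral is an affine function of $A$ whose slope does not vanish and which does not depend on $R,T$; then there is exactly one $A$ making it zero. First we check that the integral converges. By Corollary~\ref{cor:metric_asym}, $\omega_X^n-c\,\Omega_X\wedge\bar\Omega_X=O'(te^{-t})\,\omega_{\mathcal C}^n$. The perturbation $2A\,\ddc_{J_X}\chi_{R,T}(t_X)$ equals $2A\,\ddc_{J_X}t_X=O'(e^{-t})$ on the tail. Hence $\omega_{A,R,T}^n-c\,\Omega_X\wedge\bar\Omega_X$ is $O(te^{-t})$ times the cylindrical volume form. Since the cylinder has finite cross-sectional volume, this is integrable.

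Next we compute the dependence on $A$ by truncating at $\{t_X\le \tau\}$ and applying Stokes. Put $u=2A\chi_{R,T}(t_X)$ and $\omega_u=\omega_X+\ddc u$. Then
\begin{equation*}
\omega_u^n-\omega_X^n=\ddc u\wedge\sum_{k=0}^{n-1}\omega_u^k\wedge\omega_X^{n-1-k}=d\Bigl(2\dc u\wedge\sum_{k}\omega_u^k\wedge\omega_X^{n-1-k}\Bigr).
\end{equation*}
The primitive vanishes near $\{t_X\le R\}$. The integral over $\{t_X\le\tau\}$ is therefore the boundary integral over $\{t_X=\tau\}$, with $\tau>R+T$. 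On that hypersurface $\dc u=2A\,\dc t_X$, and $\omega_u=\omega_X+O'(e^{-t})$.

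We then pass to the model using Proposition~\ref{prop:Acyl_asym}, Lemma~\ref{cor:ddc-decay} and Corollary~\ref{cor:metric_asym}. These give $\dc t_X=\tfrac12\eta+O'(e^{-t})$ and $\omega_X=\pi^*\omega_D+\sigma\,dt\wedge\eta+O'(te^{-t})$. Restricted to the slice $\{t=\tau\}$, the $dt$ term drops out, so each $\omega^k\wedge\omega_X^{n-1-k}$ restricts to $\pi^*\omega_D^{n-1}$ up to $O(\tau e^{-\tau})$. The boundary integral thus becomes
\begin{equation*}
2A\int_{\{t=\tau\}} n\,\eta\wedge\pi^*\omega_D^{n-1}+O(\tau e^{-\tau}) = 2\pi n A\,\Bigl(\int_D\omega_D^{n-1}\Bigr)\cdot 2+o(1),
\end{equation*}
up to the normalization of $\eta=d\theta$ with fiber integral $2\pi$. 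The exact constant does not matter, only that it is a nonzero multiple $\kappa A$ with $\kappa=\kappa(n,\beta_D)>0$. Letting $\tau\to\infty$ and using integrability from the first step gives
\begin{equation*}
\int_X(\omega_{A,R,T}^n-c\,\Omega_X\wedge\bar\Omega_X)=\int_X(\omega_X^n-c\,\Omega_X\wedge\bar\Omega_X)+\kappa A.
\end{equation*}
The first term on the right is a finite constant $I_0$, by the decay of $f_{X,\sigma}$. The unique solution is $A=-I_0/\kappa$, and it is independent of $R,T$.

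The main obstacle is keeping the boundary term under control. One must check that the error in each restricted form really decays: it is $O(\tau e^{-\tau})$, and the slice has bounded volume, so the error tends to zero. One must also check that the truncation hypersurfaces $\{t_X=\tau\}$ are good approximations of $\{t=\tau\}$ under $\Phi$; Lemma~\ref{cor:ddc-decay} gives this. We should also note that $\kappa$ is strictly positive because $\omega_D$ is Kähler on the compact $D$, so $\int_D\omega_D^{n-1}>0$.
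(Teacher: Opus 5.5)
Your proposal is correct and follows essentially the same route as the paper. Stokes' theorem on a truncated domain turns $\int_X(\omega_{A,R,T}^n-\omega_X^n)$ into a boundary integral, which converges to $A$ times the nonzero, $R,T$-independent constant $n\int_Y\eta\wedge\pi^*\omega_D^{n-1}$, so the total integral is a nonconstant affine function of $A$. One harmless slip: with the paper's convention $\dc=\frac{\ii}{2}(\bar\partial-\partial)$ one has $\ddc=d\dc$, so the primitive is $\dc u\wedge\sum_k\omega_u^k\wedge\omega_X^{n-1-k}$ without your factor $2$, and the slope is $n\int_Y\eta\wedge\pi^*\omega_D^{n-1}$ rather than twice it; this does not matter, since only the nonvanishing of the slope is used.
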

\begin{proof}
First, the integral $\int_X(\omega_X^n-c\,\Omega_X\wedge\bar\Omega_X)$ is finite since the difference $\omega_X^n-c\,\Omega_X\wedge\bar\Omega_X$ decays exponentially on the cylindrical end. 

We use $\varphi_{A,R,T}:=2A\chi_{R,T}(t_X)$ to perturb the integral of $\omega_{A,R,T}=\omega_X+\ddc_{J_X} \varphi_{A,R,T}$. Extend the model coordinate $t$ smoothly over $X$ and let $X_s:=\{t\leq s\}$, where $s$ is large enough such that $t_X>R+T$ on $\partial X_s$. Then we have $\chi_{R,T}'(t_X)=1$, $\dc\varphi_{A,R,T}=2A\dc t_X$, $\omega_{A,R,T}=\omega_X+2A\ddc t_X$ on $\partial X_s$. Stokes' theorem gives
\begin{align*}
    \int_{X_s}(\omega_{A,R,T}^n-\omega_X^n)
    & = \int_{\partial X_s}\dc\varphi_{A,R,T}\wedge\sum_{j=0}^{n-1}\omega_{A,R,T}^{j}\wedge\omega_X^{n-1-j}\\
    & = \int_{\partial X_s}2A\dc t_X\wedge\sum_{j=0}^{n-1}(\omega_X+2A\ddc t_X)^{j}\wedge\omega_X^{n-1-j}\\
    & = \int_{\partial X_s}2A\dc_{J_L}t\wedge\sum_{j=0}^{n-1}(\omega_\mathcal C+2A\ddc_{J_L}t)^{j}\wedge\omega_\mathcal C^{n-1-j}+ O(e^{-\delta s})
\end{align*}
for any fixed $0<\delta<1$. Let $Y$ be a model cross-section, oriented as the boundary of $X_s$, and let
$$
    I_Y:=\int_Y\eta\wedge\pi^*\omega_D^{n-1}>0.
$$
Since $\ddc_{J_L}t=0$ and $2\dc_{J_L}t=\eta$, letting $s\to\infty$ gives
$$
    \int_X(\omega_{A,R,T}^n-\omega_X^n) = 2An\int_Y\dc_{J_L}t\wedge\omega_\mathcal C^{n-1} = An I_Y.
$$
The restriction of $\omega_\mathcal C$ to $Y$ is $\pi^*\omega_D$, so $I_Y$ is independent of $\sigma$. Equivalently, for the cross-section
metric $g_{Y,\sigma}:=\sigma\eta^2+\pi^*g_D$, one has $I_Y=(n-1)!\,\sigma^{-1/2}\operatorname{Vol}(Y,g_{Y,\sigma})$. Thus
$$
    \int_X(\omega_{A,R,T}^n-c\,\Omega_X\wedge\bar\Omega_X) = \int_X(\omega_X^n-c\,\Omega_X \wedge \bar\Omega_X)+AnI_Y
$$
is a nonconstant affine function of $A$, independent of $R,T$. This proves the existence and uniqueness of the required $A$.
\end{proof}

\begin{theorem}\label{thm:ma}
With the above notation, $X=M\setminus D$ admits a complete asymptotically cylindrical Calabi-Yau metric $\omega$ in $[\omega_M|_X]$.
\end{theorem}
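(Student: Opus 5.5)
We fix $\sigma>0$, take the unique $A$ given by Lemma~\ref{lem:integral-condition}, and then choose $R,T$ large as in Lemma~\ref{lem:kahler-cutoff-perturbation}, so that $\omega_0:=\omega_{A,R,T}$ is a complete K\"ahler form on $X$. We then solve the complex Monge--Amp\`ere equation
$$
    (\omega_0+\ddc_{J_X}u)^n=c\,\Omega_X\wedge\bar\Omega_X,\qquad \omega_0+\ddc_{J_X}u>0,
$$
with $u$ bounded and asymptotic to a constant at infinity. The resulting $\omega:=\omega_0+\ddc_{J_X}u$ is Ricci flat, since $\omega^n$ is a constant multiple of $\Omega_X\wedge\bar\Omega_X$ with $\Omega_X$ holomorphic and nowhere vanishing. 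Because $\omega_0-\omega_X=2A\,\ddc_{J_X}\chi_{R,T}(t_X)$ with $\chi_{R,T}(t_X)$ a smooth function on $X$, and $\omega_X-\omega_M|_X=\ddc_{J_X}q(t_X)$, the form $\omega$ lies in $[\omega_M|_X]$.

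The first step is to check that the hypotheses of the ACyl Monge--Amp\`ere theorem~\cite[Theorem~4.1]{HHN15} hold for $(X,J_X,\omega_0)$.
\begin{enumerate}[label=(\roman*)]
    \item $(X,g_0)$ is ACyl with model $(\mathcal C,\omega_{\mathcal C})$. This uses Proposition~\ref{prop:Acyl_asym}, Corollary~\ref{cor:metric_asym}, and Lemma~\ref{lem:kahler-cutoff-perturbation}, since $\Phi^*\omega_0-\omega_{\mathcal C}=O'(te^{-t})$ and $te^{-t}\le C_\delta e^{-\delta t}$ for any $0<\delta<1$.
    \item The complex structure converges exponentially to $J_L$, by Proposition~\ref{prop:Acyl_asym}.
    \item The Ricci potential $f_0:=\log\bigl(c\,\Omega_X\wedge\bar\Omega_X/\omega_0^n\bigr)$ lies in $C^\infty_\delta$. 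This follows from Corollary~\ref{cor:metric_asym}, Proposition~\ref{prop:volume-form-asymptotics}, and the fact that on the tail $\omega_0-\omega_X=2A\,\ddc_{J_X}t_X=O'(e^{-t})$ by Lemma~\ref{cor:ddc-decay}.
    \item The normalization $\int_X(e^{f_0}-1)\,\omega_0^n=0$ holds. This is exactly Lemma~\ref{lem:integral-condition}.
\end{enumerate}
Granting these, \cite[Theorem~4.1]{HHN15} gives a smooth solution $u$ with $u-u_\infty\in C^\infty_{\delta'}$ for some constant $u_\infty$ and some $0<\delta'\le\delta$. Hence $\omega-\omega_0\in C^\infty_{\delta'}$, and $\omega$ is ACyl with the same cylindrical model. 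In particular it is complete, and it lies in $[\omega_M|_X]=i^*\beta$.

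The main obstacle is making sure that the hypotheses match the precise form of~\cite[Theorem~4.1]{HHN15}. That theorem needs a decay rate $\delta$ smaller than the first nonzero indicial root of the cross-section Laplacian, so we shrink $\delta$ if necessary. Since $Y$ here is a circle bundle over $D$ rather than a product, we must confirm that the Fredholm theory on $X_\infty$ only requires the cylindrical end to be ACyl, so that the circle bundle causes no difficulty. We also need that the linear term $2A\chi_{R,T}(t_X)$ in the potential is allowed. In HHN the solution space consists of functions asymptotic to constants, and any linear growth must sit in the background metric rather than in $u$. This is exactly why we absorbed the linear term into $\omega_0$ before solving. With that choice the integral condition in (iv) is precisely the obstruction that~\cite[Theorem~4.1]{HHN15} requires to vanish, and the rest of the argument is routine.
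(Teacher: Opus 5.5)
Your proposal is correct and follows essentially the same route as the paper. Like the paper, you fix $A$ from Lemma~\ref{lem:integral-condition} and choose $R,T$ so that $\omega_{A,R,T}$ is K\"ahler and ACyl with Ricci potential $O'(te^{-t})$ and vanishing integral, then apply \cite[Theorem~4.1]{HHN15} and read off the ACyl decay of $\omega_{A,R,T}+\ddc_{J_X}u$. Your explicit check that the solution lies in $[\omega_M|_X]$ and your remarks on absorbing the linear term into the background form are small additions the paper leaves implicit.
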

\begin{proof}
Fix $\sigma,A,R,T$ as before. By Corollary~\ref{cor:metric_asym} and Lemma~\ref{lem:kahler-cutoff-perturbation}, we have
$$
    \omega_{A,R,T}-\omega_\mathcal C=O'(te^{-t}),\quad \Omega_X-\Omega_{\mathcal C}=O'(e^{-t}).
$$ 
Thus the Ricci potential
$$
    f_{A,R,T} :=  \log \frac{c\Omega_X\wedge\bar \Omega_X}{\omega_{A,R,T}^n}=O'(te^{-t}).
$$
Moreover, Lemma~\ref{lem:integral-condition} gives
$$
    \int_X(e^{f_{A,R,T}}-1) \omega_{A,R,T}^n = \int_X (c\,\Omega_X\wedge\bar\Omega_X-\omega_{A,R,T}^n) = 0.
$$
The ACyl K\"ahler Ricci flat theorem~\cite[Theorem~4.1]{HHN15} therefore gives a solution $u\in C^\infty_\delta(X)\cap \mathrm{PSH}(X,\omega_{A,R,T})$ for some $\delta>0$ such that 
$$
    (\omega_{A,R,T}+\ddc_{J_X}u)^n = e^{f_{A,R,T}} \omega_{A,R,T}^n = c\,\Omega_X\wedge\bar\Omega_X.
$$
The decay of $u\in C^\infty_\delta(X)$, Lemma~\ref{lem:kahler-cutoff-perturbation}, and Proposition~\ref{prop:Acyl_asym} imply that
$$
    (\omega_{A,R,T}+\ddc_{J_X}u)-\omega_\mathcal C=O'(te^{-\delta' t}),
$$
for any $0< \delta' < \min\{1,\delta\}$. Then $\omega: =\omega_{A,R,T}+\ddc_{J_X}u$ is the desired asymptotically cylindrical K\"ahler Ricci flat metric.
\end{proof}

\section{Compactification of complete asymptotically cylindrical Calabi-Yau manifolds}\label{sec:compact}

In this section, we show the following classification result: 
\begin{theorem}\label{thm:global_compactification}
 Every ACyl Calabi--Yau manifold $(X,J_X)$ with a unitary flat cylindrical end is biholomorphic to $M\setminus D$, where $M$ is a compact K\"ahler manifold and $D$ is a smooth anticanonical divisor with flat normal bundle. The K\"ahler Ricci flat metric $\omega$ could be recovered from the construction of Theorem~\ref{thm:ma} applied to $(M, D)$.
\end{theorem}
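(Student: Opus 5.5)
We follow the strategy of \cite[Theorem~C]{HHN15}, in three steps: holomorphic compactification, construction of a K\"ahler class on the compactification, and recovery of the metric via uniqueness.

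\textbf{Step 1: holomorphic compactification.} Let $\Phi:\{t\geq t_0\}\subset\mathcal C\to X\setminus\mathrm{int}K$ be as in Definition~\ref{def:cplxAcyl}. On the model end, $J_L$ extends across the zero section $\mathbf 0_L$. Since $\Phi^*J_X-J_L\in C^\infty_\delta$ only decays like $e^{-\delta t}=\rho^\delta$, $\Phi^*J_X$ extends only as a H\"older continuous almost complex structure on $\mathcal C_r$, so we cannot glue directly. Instead we produce honest $J_X$-holomorphic functions on the end that are close to model coordinates $(w_1,w_2,\ldots,w_n)$, where $w_1$ is the fiber coordinate in a flat frame over a chart $U\subset D$. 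Start from the model fiber coordinate $w_1$ and the pulled-back $\pi^*w_\alpha$. Their $\bp_{J_X}$ is $O'(e^{-\delta t})$ times the size of the function. Correct them by solving $\bp$ on the end using weighted $L^2$ or H\"ormander estimates with weight $e^{-\lambda t}$; completeness and the bounded geometry of the ACyl metric make this available. The weight must be chosen so that the correction decays faster than $w_1$ itself, i.e.\ like $e^{-(1+\delta')t}$. Globally, $w_1$ should be understood as a section of $\pi^*L^{-1}$: we solve for a $J_X$-holomorphic ``defining section'' of the pulled-back flat bundle, using the unitary flat structure so that transition functions are constants and the local corrections patch. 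We then define $M:=X\sqcup\mathcal C_r/\!\sim$, gluing via the map given by the corrected coordinates, which is a biholomorphism for $r$ small. This makes $D=\mathbf 0_L$ a smooth divisor with normal bundle $L$, which is flat. The form $\Omega_X$ is asymptotic to $\frac{d\xi}{\xi}\wedge\pi^*\Omega_D$, so it extends as a meromorphic volume form with a simple pole along $D$. Hence $K_M=\mathcal O(-D)$, i.e.\ $D$ is anticanonical.

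\textbf{Step 2: K\"ahler class on $M$.} We modify $\omega$ near infinity. Write $\omega=\omega_{\mathcal C}+d\gamma$ on the end with $\gamma$ exponentially decaying; this uses the Hodge theory of the cylinder together with exactness of the decaying error, since $\omega_{\mathcal C}$ and $\omega$ define the same class on the end up to $\pi^*[\omega_D]$. Replace $\omega_{\mathcal C}$ on the end by $\pi^*\omega_D+\ddc\psi(\rho^2)$, with $\psi$ a bump function, which is smooth across $D$ in the corrected coordinates. Cutting off $\gamma$ and adding a large multiple of the curvature-type form $\ddc\,\psi(|\xi|^2_h)$ then yields a K\"ahler form $\omega_M$ on $M$ whose restriction to $X$ is cohomologous to $\omega$. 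This is the content of the HHN argument.

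\textbf{Step 3: recovering the metric.} Apply Theorem~\ref{thm:ma} to $(M,D,[\omega_M])$, with $\sigma$ chosen equal to the cylindrical parameter of $\omega$ and the Ricci-flat $\omega_D$ matching. This produces an ACyl Calabi--Yau metric $\omega'$ in $[\omega|_X]$ with the same cylindrical model and the same $\Omega_X$. The uniqueness theorem \cite[Theorem~E]{HHN15} then gives $\omega'=\omega$, possibly after adjusting $A$ and the linear-in-$t$ term. The normalization in Lemma~\ref{lem:integral-condition} is forced by the volume constraint, and matching the asymptotic constants uses that both metrics are asymptotic to the same $\omega_{\mathcal C,\sigma}$.

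The main obstacle is Step 1. The difficulty is obtaining holomorphic coordinates with decay strictly faster than $\rho$ when the error in the complex structure is only $O(\rho^\delta)$ with $\delta$ possibly less than $1$. We would first try a bootstrap: each solve of $\bp$ improves the decay rate by $\delta$, and we iterate until it exceeds $1$. Indicial-root considerations of the cylindrical $\bp$-Laplacian are needed to rule out obstructions at integer rates.
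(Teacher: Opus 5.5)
Your three-step architecture (holomorphic compactification, K\"ahler compactification, recovery by \cite[Theorem~E]{HHN15}) is the paper's. Your Step~3 and the residue argument for $K_M=\mathcal O(-D)$ essentially agree with it. The gap is Step~1, which you leave open, and as written it aims at something false. You glue $X$ to $\mathcal C_r$ ``via the map given by the corrected coordinates, which is a biholomorphism.'' If $\mathcal C_r$ carries $J_L$, such a map linearizes a neighborhood of $D$: it would be a biholomorphism onto a neighborhood of the zero section of $N_D=L$. That is Arnold's small-divisor problem, and it fails in general. Take $X=S\setminus C$ with the metric of Theorem~\ref{thm:ma}. A holomorphic map exponentially close to $\Phi^{-1}$ would extend across $C$ by Riemann extension and linearize $C\subset S$, which is impossible for the nonlinearizable embeddings of \cite{FT26}. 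Your locally corrected $\pi^*w_\alpha$ need not patch to a holomorphic retraction onto $D$. What one can obtain, and what the paper uses via Theorem~\ref{thm:extension}, is a diffeomorphism $F$ such that $\widetilde J=F^*\Phi^*J$ extends smoothly across $D$, with $\widetilde J|_D=J_L|_D$ and $N_D\simeq L$. One then glues $X_{\mathrm{out}}$ to $(\mathcal C_r,\widetilde J)$, not to $(\mathcal C_r,J_L)$.

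Your analytic plan also misplaces the difficulty. Since $\bar\partial_J w_1=\tfrac{\ii}{2}\,dw_1\circ(J-J_L)$, the error is already $O'(e^{-(1+\delta)t})$, so no rate bootstrap is needed. Moreover, H\"ormander with weight $\lambda t$ gives no positivity, because $t$ is pluriharmonic up to exponentially small terms and $\mathrm{Ric}=0$. The real problem is the regularity of $J$ at $D$. In smooth coordinates the normal-to-horizontal part of $\Phi^*J-J_L$ is only $O(\rho^{\delta-1})$, so for $\delta<1$ it need not even be bounded, let alone H\"older as you claim. HHN's device, followed in the paper, proceeds in three moves. First, gauge-fix by a diffeomorphism making every fiber disc holomorphic; this is a fiberwise nonlinear Cauchy--Riemann equation solved by contraction, and it removes this pole. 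Second, the vanishing Nijenhuis tensor gives a fiberwise elliptic equation whose bootstrap yields a $C^{1,\alpha}$ extension. Third, Newlander--Nirenberg in flat charts and Cauchy expansion along the fiber discs give smoothness. Only one-variable $\bar\partial$ on punctured discs is ever inverted.

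A related issue affects Step~2: cutting off $\gamma$ destroys the $(1,1)$-type, since $d(\chi\gamma)$ acquires the $(0,2)$-part $\bar\partial\chi\wedge\gamma^{0,1}$. The paper instead writes the error as $d(\beta+\bar\beta)$ with $\beta$ of type $(0,1)$, solves $\bar\partial f=\beta$ on the fiber discs, and cuts off the potential $-2\operatorname{Im}f$. It then checks that $\kappa=\beta-\bar\partial f$ extends across $D$ with $\kappa|_D=0$. This is also what yields $\omega_M|_X-\omega=\ddc V$ and $\omega_M|_D=\omega_D$, both of which your Step~3 needs.
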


The holomorphic compactification follows from the unitary flat version of HHN \cite[Theorem~3.1]{HHN15}, which extends the complex structure $J_X$ across the zero section of $L$ by fixing the gauge of holomorphicity along fiber.
\begin{theorem}\label{thm:extension} 
Let $J$ be a smooth integrable complex structure on $\mathcal C_{r_0}^*$ such that, for some $\tau>0$, $J-J_L=O'(e^{-\tau t})$. Then, for some $0<r<r_0$, there are a punctured neighborhood $U\subset\mathcal C_{r_0}^*$ of the zero section and a diffeomorphism $F:\mathcal C_r^*\to U$ such that $\widetilde J:=F^*J$ extends smoothly to $\mathcal C_r$. Moreover, the fiber discs are $\widetilde J$-holomorphic, and
$$\widetilde J|_D=J_L|_D,
 \qquad N_{D/(\mathcal C_r,\widetilde J)}\simeq L.$$
\end{theorem}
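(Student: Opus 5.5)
We adapt the proof of \cite[Theorem~3.1]{HHN15}, with the torsion hypothesis replaced by unitary flatness of $h$. The idea is to build, near infinity, a family of $J$-holomorphic discs that asymptotically agree with the fiber discs of $L$. We then reparametrize by these discs and show that the resulting complex structure extends across $\mathbf 0_L$.

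\textbf{Step 1: holomorphic discs.} Work locally over a simply connected chart $U\subset D$ with a flat unitary holomorphic frame $s$, so that $\mathcal C^*_{r_0}|_U\cong U\times\{0<|w|<r_0\}$ with $J_L$ the product structure. Pass to the universal cover of the punctured disc via $w=e^{-z}$. There the model is a half-cylinder of bounded geometry, and the error satisfies $J-J_L=O'(e^{-\tau t})$ uniformly. For each $p\in D$ and each point of the fiber $\{|w|=r\}$, we solve for a $J$-holomorphic map $u_p:\{0<|\zeta|<r\}\to\mathcal C^*_{r_0}$ of the form $u_p(\zeta)=\exp(\zeta s(p)+v_p(\zeta))$, where $v_p$ is a small vertical/horizontal correction. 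The nonlinear Cauchy--Riemann equation $\bar\partial_J u=0$ becomes $\bar\partial_{J_L}v=Q(v)+(J-J_L)$-terms, which decay like $|\zeta|^{\tau}$ in the cylindrical picture. We invert $\bar\partial$ on weighted H\"older spaces on the half-cylinder with a weight in $(0,\tau)$ and apply a contraction mapping argument for $r$ small. The decaying weight forces $v_p\to0$ as $\zeta\to0$, and the solution depends smoothly on $p$ and on the boundary normalization. Flatness of $h$ ensures the frames glue with $U(1)$ transition constants, so the construction is compatible across charts. This is where the torsion assumption of HHN is unnecessary: we only need that the model $J_L$ and $\rho$ are globally defined, which flatness gives.

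\textbf{Step 2: the diffeomorphism $F$.} Define $F(\zeta s(p)):=u_p(\zeta)$. The $U(1)$-equivariance of the model and uniqueness in the fixed-point argument make $F$ well-defined globally. Since $v_p=O'(|\zeta|^{\tau'})$ in cylindrical norms, $F$ is $C^1$-close to the identity on $\mathcal C_r^*$, hence a diffeomorphism onto a punctured neighborhood $U$. By construction, the fiber discs are $\widetilde J$-holomorphic for $\widetilde J=F^*J$.

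\textbf{Step 3: extension and the main obstacle.} We expect the hardest step to be showing that $\widetilde J$ extends smoothly across $\mathbf 0_L$, rather than merely continuously. Continuity with $\widetilde J|_D=J_L|_D$ follows because $\widetilde J-J_L=O'(e^{-\tau' t})$: in the smooth frame $dw,\partial_w$ the worst coefficients pick up a factor $e^{t}$ by the frame estimates in the proof of Proposition~\ref{prop:Acyl_asym}. Obtaining this continuity requires $\tau'>0$ together with the holomorphicity of the fibers, which kills the components of the $\partial_{w}$-direction error.

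For higher regularity, we plan to combine fiberwise holomorphicity with integrability. In the frame $\partial_w$, $\partial_{w_\alpha}$, the tensor $\widetilde J-J_L$ has no component with $d\bar w$ input. We then use the Newlander--Nirenberg equations (the Nijenhuis tensor vanishes) as a $\bar\partial$-type system in the $w$ variable for the remaining coefficients. Removable singularity for bounded solutions of $\partial_{\bar w}f=(\text{smooth})\cdot f$ on the punctured disc should then bootstrap to smoothness, iterating in $w$-derivatives.

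Finally, since $\widetilde J|_D=J_L|_D$ and the fibers are holomorphic discs tangent to $L$, the normal bundle $N_{D/(\mathcal C_r,\widetilde J)}$ is identified with $L$ through $d$ of the zero section. Its holomorphic structure agrees with that of $L$ because the transition functions are the same flat $U(1)$ constants, which is the $\bar\partial$-compatibility check used in Proposition~\ref{prop:Acyl_asym}.
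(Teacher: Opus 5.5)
Your construction of $F$ follows the paper's sketch: fiberwise $J$-holomorphic discs obtained from a fixed-point argument for a perturbed Cauchy--Riemann operator on weighted spaces, glued using the constant $U(1)$ transitions. So does your fiberwise removable-singularity idea. The genuine gap is the last claim, $N_{D/(\mathcal C_r,\widetilde J)}\simeq L$.

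Constant $U(1)$ transitions for the frames $[\partial_{w_i}]$ only identify the underlying smooth bundles. You must also show that each $[\partial_{w_i}]|_D$ is a \emph{holomorphic} section for the $\bar\partial$-operator induced by $\widetilde J$. That operator depends on the first-order behaviour of $\widetilde J$ transverse to $D$, not just on $\widetilde J|_D=J_L|_D$. Your appeal to Proposition~\ref{prop:Acyl_asym} runs that computation backwards. There, $N_D\cong L$ was the input that killed the $\bar w$-linear horizontal-to-vertical coefficient. Here, that vanishing is what must be proved, and it is exactly where $\tau>0$ enters.

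To see that the decay is essential, let $\gamma$ be a $\bar\partial$-closed $(0,1)$-form on $D$ such that $\bar\partial_{L'}:=\bar\partial_L+\gamma$ gives $L'\not\cong L$. The complex structure $J_{L'}$ on the same smooth total space has holomorphic fibers, equals $J_L$ along $D$, and has horizontal-to-vertical coefficient $-w\gamma_{\bar\alpha}$. Thus $J_{L'}-J_L$ is $O'(1)$ but not decaying, and its normal bundle is $L'$.

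The missing step is easy to supply, and the same computation makes your Step 3 rigorous.

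In a flat chart with coordinates $(w,z_\alpha)$, write $T^{0,1}_{\widetilde J}=\{X+\Theta X: X\in T^{0,1}_{J_L}\}$. Fiber holomorphicity means $\Theta(\partial_{\bar w})=0$. Now consider $[\partial_{\bar w},\partial_{\bar z_\alpha}+\Theta_{\bar\alpha}]=(\partial_{\bar w}\Theta^\beta_{\bar\alpha})\partial_{z_\beta}+(\partial_{\bar w}\Theta^w_{\bar\alpha})\partial_w$. This is a $J_L$-$(1,0)$ vector, and involutivity says it must lie in the graph, which forces $\partial_{\bar w}\Theta_{\bar\alpha}=0$ exactly.

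So you do not need an equation $\partial_{\bar w}f=(\text{smooth})\cdot f$, whose coefficient is not known a priori to be smooth up to $D$. The paper handles that issue through HHN's $C^{1,\alpha}$ bootstrap and Newlander--Nirenberg.

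Since $|\partial_w|_{g_{\mathcal C}}\sim e^{t}$, the cylindrical decay gives $\Theta^\beta_{\bar\alpha}=O(|w|^{\tau'})$ and $\Theta^w_{\bar\alpha}=O(|w|^{1+\tau'})$. These bounded holomorphic functions of $w$ extend across $w=0$. Cauchy's formula on a fixed circle then gives joint smoothness in $(w,z)$, and $\Theta^w_{\bar\alpha}$ vanishes to second order at $w=0$.

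Moreover, $\partial_w$ is $\widetilde J$-$(1,0)$ everywhere, because $\overline{\Theta(\partial_{\bar w})}=0$. Hence $\bar\partial_N[\partial_w](\partial_{\bar z_\alpha})$ is the normal part of $[\partial_{\bar z_\alpha}+\Theta_{\bar\alpha},\partial_w]$ along $D$, namely $-(\partial_w\Theta^w_{\bar\alpha})|_{w=0}\,[\partial_w]=0$. Only at this point do the constant transitions give $N_{D/(\mathcal C_r,\widetilde J)}\cong L$.

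Two smaller points in Step 1. First, $w=e^{-z}$ with $\theta$ periodic is the half-cylinder itself, not the universal cover; a solution on the half-plane need not descend. Second, $\bar\partial$ on the weighted half-cylinder has infinite-dimensional kernel. Your uniqueness and gluing claims therefore require fixing a right inverse, as the paper does with $\mathcal R_x$, chosen rotation-equivariantly so that different flat charts produce the same discs.
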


\begin{proof}[Sketch of the proof]
The proof follows almost verbatim from HHN~\cite[Section~3.2]{HHN15} so we only give a sketch of proof here. Choose unitary parallel frames $s_i$ of $L$. If $s_j=g_{ij}s_i$, then the corresponding fiber coordinates satisfy
$$w_j=g_{ij}^{-1}w_i,\qquad g_{ij}\in U(1)\text{ constant}.$$
Thus the local product models differ only by constant rotations of the fibers. The radial coordinate, the cylindrical norms, and the flat horizontal distribution are global.

First, in order to remove the possible pole of normal-to-horizontal term in $J-J_X$, we fix a gauge of diffeomorphism to its image $F:\mathcal{C}_r^*\to \mathcal{C}_{r_0}^*$ such that the $F^*J$ equal to $J_L$ on vertical tangent vectors. This is done by gluing the fiberwise holomorphic disc diffeomorphism $f_x: C_{x,r}\to L$ smoothly, where $C_{x,r}: = L_x\cap\mathcal{C}^*_r$ be the holomorphic disc over $x$. Here we try to find such $f_x$ as an exponential map
$$
    f_{x,v}(p) =\exp^{g_\mathcal{C}}_{p} v(p), \text{ for }v\in C^{k,\alpha}_\delta(C_{x,r},TL|_{C_{x,r}}) 
$$
satisfies the nonlinear equation 
$$
    df_{x,v}(\partial t) +J(df_{x,v}(\partial_\theta)) = 0.
$$ 

The linearization of this nonlinear equation is the standard Cauchy-Riemann operator
$$
    \nabla_{\partial_t} v +J_L \nabla_{\partial_\theta} v = 0
$$
on punctured disc together with a small perturbation operator $\mathcal{U}_x$ varying smoothly with $x$. By choosing base points $x_i$ in a given chart we can choose the fiberwise bounded right inverse $\mathcal{R}_x$ of $\bar \partial$ also smoothly varying with $x$. Then by standard fixed point argument, we can find $v$ such that $f_{x,v}(C_{x,r})$ smoothly varies with $x$ and solves above non-linear equation. Then, they assemble to a diffeomorphism $F$ exponentially close to the identity that we need.

Second, the vanishing of the Nijenhuis tensor gives the same fiberwise elliptic equation as in HHN. Hence, the bootstrap in HHN \cite[Section~3.2, equations~(3.10)--(3.12)]{HHN15} gives a $C^{1,\alpha}$ extension for $0<\alpha<1$. Their decay bootstrap and Newlander--Nirenberg argument apply in each flat chart, and the subsequent Cauchy expansion along the holomorphic fiber discs gives smoothness across $D$. These are local tensor statements, so no global fiber coordinate is required.

Finally, HHN's normal-bundle argument shows that $\nu_i=[\partial_{w_i}]$ is a holomorphic local normal section. On overlaps $\nu_j=g_{ij}\nu_i$, which identifies the normal bundle with $L$.
\end{proof}

The same compatibility with unitary flat transitions applies to the first proof of HHN \cite[Theorem~3.2]{HHN15}.

\begin{theorem}\label{thm:kahler_compactification}
In the setting of Theorem~\ref{thm:extension}, suppose that there is a
$J$-K\"ahler form $\omega$ on $\mathcal C_{r_0}^*$ such that, for some $\tau>0$ and every $m\geq0$,
$$
    |\nabla^m(\omega-\omega_\mathcal C)|_{g_\mathcal C} =O(e^{-\tau t}), \qquad \omega_\mathcal C=\pi^*\omega_D+\sigma\ddc_{J_L}t^2.
$$
After shrinking $r$, $(\mathcal C_r,F^*J)$ admits a smooth K\"ahler form $\widehat\omega$ agreeing with $F^*\omega$ on $\{r/2<\rho<r\}$ and $\widehat\omega|_D =\omega_D$.
\end{theorem}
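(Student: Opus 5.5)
Following the first proof of HHN \cite[Theorem~3.2]{HHN15}, I would split $F^*\omega$ into a closed part that extends smoothly across $D$ and a potential part that can be cut off near $D$. Write $\widetilde J:=F^*J$ and $\widetilde\omega:=F^*\omega$. Since $F$ is exponentially close to the identity, as constructed in Theorem~\ref{thm:extension}, we still have $\widetilde\omega-\omega_\mathcal C=O'(e^{-\tau' t})$ for some $\tau'>0$. Moreover $\widetilde J-J_L=O'(e^{-\tau' t})$ on $\mathcal C_r^*$, and $\widetilde J$ extends smoothly with $\widetilde J|_D=J_L|_D$.

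\textbf{Step 1: cohomology.} The punctured neighbourhood $\mathcal C_r^*$ retracts onto the circle bundle $Y=\{\rho=r'\}$. From the Gysin sequence, $H^2(Y)$ is $\pi^*H^2(D)$ modulo the Euler class $c_1(L)$, which is zero in real cohomology because $L$ is flat, plus a contribution from $H^1(D)$ wedge $\eta$. Now $\omega_\mathcal C-\pi^*\omega_D=\sigma\, d(t\eta)$ is exact. Hence $\widetilde\omega-\pi^*\omega_D$ is a closed form decaying exponentially. Its class on $Y$ is computed on slices $\{t=s\}$ and tends to zero as $s\to\infty$, so it vanishes. Therefore $\widetilde\omega-\pi^*\omega_D=d\beta$ on $\mathcal C_r^*$.

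\textbf{Step 2: choose $\beta$ decaying and convert it to a potential.} Integrate radially along the flow of $\partial_t$ from infinity, as in the Poincar\'e lemma on the cylinder. Using the decay of $\widetilde\omega-\omega_\mathcal C$, this gives
$$
\widetilde\omega=\pi^*\omega_D+\sigma\,d(t\eta)+d\gamma,\qquad \gamma=O'(e^{-\tau' t}).
$$
Next I would write $\sigma t\eta=2\sigma t\,\dc_{J_L}t=\sigma\dc_{J_L}t^2$ and replace $\dc_{J_L}$ by $\dc_{\widetilde J}$, absorbing the error into $\gamma$. The error is $O'(te^{-\tau' t})$, so it still decays.

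The form $\pi^*\omega_D$ is smooth across $D$, but it is not $\widetilde J$-K\"ahler off $D$, since $\widetilde J\ne J_L$. I would handle this by the standard trick that $\pi^*\omega_D+\epsilon\ddc_{\widetilde J}\rho^2$ is K\"ahler near $D$ for suitable small $\epsilon$. This is where the smooth extension of $\widetilde J$ and $\widetilde J|_D=J_L|_D$ are used.

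I then need the $(0,2)$-part of $d\gamma$ to be handled. Since $\widetilde\omega$ and $\sigma\ddc t^2$ are $(1,1)$, the $(2,0)$-part of $d\gamma$ equals minus that of $\pi^*\omega_D$ with respect to $\widetilde J$. A $\bar\partial$-lemma along fibre discs, solved with exponential weights on the punctured discs as in HHN, should then allow me to rewrite
$$
\widetilde\omega=\pi^*\omega_D+\alpha+\ddc_{\widetilde J}\big(\sigma t^2+u\big),
$$
where $\alpha$ is a closed $2$-form extending smoothly across $D$ with $\alpha|_D=0$ and $u=O'(e^{-\tau'' t})$. I expect this to be the \textbf{main obstacle}: obtaining a decaying potential $u$ while controlling types with respect to the non-standard $\widetilde J$. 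My first attempt would follow HHN's first proof verbatim, using the unitary flat charts, where transitions are constant rotations and all weighted estimates are local.

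\textbf{Step 3: gluing.} Set
$$
\widehat\omega=\pi^*\omega_D+\alpha+\ddc_{\widetilde J}\big(\psi(t)\,(\sigma t^2+u)\big)+\epsilon\,\ddc_{\widetilde J}\big(\lambda(t)\rho^2\big).
$$
Here $\psi$ is a cutoff equal to $1$ for $\rho>r/2$ and equal to $0$ near $D$, and $\lambda$ is supported near $D$. The term $\sigma t^2$ is convex and cannot simply be cut off. I would therefore replace $\sigma t^2$ by $q(t)$, with $q$ convex and constant for $t$ large, similar to the function used before Corollary~\ref{cor:metric_asym}. Its $\ddc$ is $q'\ddc t+q''dt\wedge\dc t$, which is nonnegative up to $O(e^{-\tau t})$ errors.

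On the region $\{\rho>r/2\}$ the form agrees with $F^*\omega$. Near $D$ it equals $\pi^*\omega_D+\alpha+\epsilon\ddc\rho^2$, which is smooth and K\"ahler, and its restriction to $D$ is $\omega_D$. In the transition region, positivity follows by taking $r$ small, so that the errors $u$, $\alpha$ and $\widetilde J-J_L$ are negligible compared with the positive fibre term $q''\,dt\wedge\dc t$ and with $\pi^*\omega_D$.
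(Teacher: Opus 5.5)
Your Steps 1--2 follow the paper's route. You integrate radially, solve a fibrewise $\bar\partial$-problem as in HHN (3.18)--(3.20), and obtain a closed form $d(\kappa+\bar\kappa)$ extending across $D$, plus $\ddc_{\widetilde J}$ of a quadratic and a decaying potential. The gap is in Step 3, which fails as written, for two reasons.

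\textbf{The profile $q$ does not exist.} No convex $q$ can equal $\sigma t^2$ on the gluing annulus $\{r/2<\rho<r\}=\{\log(1/r)<t<\log(2/r)\}$ and also be constant for large $t$. On the annulus $q'=2\sigma t>0$, and convexity forces $q'\geq 2\sigma\log(2/r)>0$ beyond it. To bend the profile down you need a linear term $c_1t$ with $c_1<-2\sigma\log(2/r)$. But $\ddc_{\widetilde J}t\neq0$ (only $\ddc_{J_L}t=0$). So $\widehat\omega$ would differ from $F^*\omega$ on the annulus by $c_1\ddc_{\widetilde J}t$, unless that term is built into the decomposition before the $\bar\partial$-step.

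\textbf{The fibre term near $D$ must be large, not small.} The form $\pi^*\omega_D+\alpha+\epsilon\ddc_{\widetilde J}\rho^2$ is not Kähler near $D$ for small $\epsilon$ in general. The condition $\alpha|_D=0$ only kills the pullback to $D$; $\alpha$ generally has nonzero mixed components along $D$ in smooth coordinates. For example, take $L$ trivial over an elliptic curve, $J=J_L$, $\omega_D=\ii a\,du\wedge d\bar u$, and $\omega=\omega_{\mathcal C}+\theta$ with $\theta=\ii c\,(dw\wedge d\bar u+du\wedge d\bar w)$. This satisfies the hypotheses. The radial primitive has $(0,1)$-part $\ii c\,w\,d\bar u$, which vanishes on fibres, so $\alpha=\theta$, and positivity at $D$ requires $a\epsilon>c^2$. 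Once the fibre coefficient is large, your separate cut-off becomes a problem. The function $\lambda(t)e^{-2t}$ cannot be convex in $t$, since it vanishes for small $t$ and decreases for large $t$. Hence $\ddc_{\widetilde J}(\lambda\rho^2)$ is negative in the fibre direction somewhere, and you never show that $q''$ dominates it there.

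The missing idea is the paper's single profile in the rescaled variable $s=t+\log r$. It takes a strictly convex $\phi(s)$ equal to $P(s)=\sigma s^2+c_1s+c_2$ (with $c_1\ll0$) for $s\le3$, and to $Ce^{-2s}=C\rho^2/r^2$ for $s\ge5$. It also writes $\widetilde\omega=\ddc_{\widetilde J}P(s)+\pi^*\omega_D+\psi_r$ from the start. Then $c_1\ddc_{\widetilde J}s$ is part of the decaying error $\psi_r$, and $\widehat\omega=\widetilde\omega$ exactly for $s\le1$. Near $D$ the fibre coefficient is $C/r^2$. After rescaling $w=r\zeta$, the mixed and fibre components of $d(\kappa+\bar\kappa)$ become $O(r)$ and $O(r^2)$, so $\widehat\omega$ is a small perturbation of $(\pi^*\omega_D)^{1,1}+C\ddc|\zeta|^2$. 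On the transition region, positivity comes from $\phi''>0$ uniformly in $r$, against errors that vanish as $r\to0$. Your scheme can be repaired along these lines (fixed large fibre coefficient, $\lambda$ switching on where $q''>0$, $r$ small), but that essentially reproduces the paper's device.
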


\begin{proof}[Sketch of the proof]
Let $\widetilde\omega=F^*\omega$, $\widetilde J=F^*J$, and $s=t+\log r$. Choose constants $c_1,c_2,C$, with $c_1$ sufficiently negative, $c_2$ suitably large, $C>0$, so that a smooth strictly convex function $\phi(s)$ equals $P(s)=\sigma s^2+c_1s+c_2$ for $s\leq3$ and $Ce^{-2s}$ for $s\geq5$. After shrinking $r$, write
$$
    \widetilde\omega=\ddc_{\widetilde J}P(s)+\pi^*\omega_D+\psi_r.
$$
The closed error $\psi_r$ is exponentially decaying in cylindrical norms. Following \cite[Theorem~3.2, equations~(3.18)--(3.20)]{HHN15}, radial integration gives $\psi_r=d(\beta+\bar\beta)$ with $\beta$ of type $(0,1)$ for $\widetilde J$. Solve $\bar\partial f_x=\beta$ along each holomorphic fiber and assemble to a global function $f$. Let
$$
    u=-2\operatorname{Im}f,\qquad \kappa=\beta-\bar\partial f,\qquad \psi_r+\ddc_{\widetilde J}u=d(\kappa+\bar\kappa).
$$
Here $f$ decays exponentially. The form $\kappa$ vanishes on $\ker d\pi$, and \cite[(3.19)]{HHN15} implies that it extends smoothly across $D$ with $\kappa|_D=0$. Thus, it has decay order at least 1.

Choose $\chi(s)=0$ for $s\leq1$ and $\chi(s)=1$ for $s\geq2$, and denote $\kappa_\chi=\beta-\bar\partial(\chi f)$. Define
\begin{align*}
    \widehat\omega &:=\pi^*\omega_D+d(\kappa_\chi+\bar\kappa_\chi)+\ddc_{\widetilde J}\phi(s),\\
    \widehat\omega-\widetilde\omega &=\ddc_{\widetilde J}\bigl(\chi u+\phi(s)-P(s)\bigr).
\end{align*}
Thus $\widehat\omega$ is closed and of type $(1,1)$, and equals $\widetilde\omega$ for $s\leq1$, in particular on $r/2<\rho<r$. Near $D$, $\kappa_\chi=\kappa$ and $\phi=C\rho^2/r^2$. Considering the type of $\widehat\omega$ we know that $d(\kappa+\bar\kappa) + (\pi^*\omega_D)^{0,2} + (\pi^*\omega_D)^{2,0}$ extends smoothly and vanishes when restricted to $TD$. For the inclusion $\iota_D:D\hookrightarrow\mathcal C_r$, we have
$$
    \widehat\omega|_D:=\iota_D^*\widehat\omega=\omega_D,
$$
since $\iota_D^*(d(\kappa+\bar\kappa))=0$ and $\iota_D^*\ddc_{\widetilde J}\rho^2=0$. Finally, strict convexity and smallness give positivity on the bounded transition region. Near $D$, after the fiber rescaling $w=r\zeta$,  make the corrected form a small perturbation of $(\pi^*\omega_D)^{1,1}+C\ddc_{J_L}|\zeta|^2$. Hence $\widehat\omega$ is K\"ahler after shrinking $r$.
\end{proof}

Now we are ready to present the proof of Theorem~\ref{thm:global_compactification}.
\begin{proof}[Proof of Theorem~\ref{thm:global_compactification}]
We use the gluing argument of HHN \cite[Section~3.1]{HHN15}, with the unitary-flat local results proved above. Let $(J,\omega,\Omega,g)$ be the ACyl Calabi-Yau structure on $X$, and choose its end identification $\Phi:\mathcal C_{r_0}^{*}\longrightarrow X\setminus K$ for a compact subset $K\subset X$. 

The weighted condition \eqref{eq:complex-acyl-weighted} gives, for some $\tau>0$ and every $m\geq0$,
    \begin{equation}\label{eq:global-acyl-estimates}
      \left|\nabla^m(\Phi^*J-J_L)\right|_{g_\mathcal C}+\left|\nabla^m(\Phi^*\omega-\omega_\mathcal C)\right|_{g_\mathcal C} +\left|\nabla^m(\Phi^*\Omega-\Omega_\mathcal C)\right|_{g_\mathcal C}=O(e^{-\tau t}).
    \end{equation}

Apply Theorem~\ref{thm:extension} to $\Phi^*J$. After shrinking the radius, it gives a diffeomorphism $F$ asymptotic to the identity for which
    $$
        \widetilde J:=F^*\Phi^*J
    $$
    extends smoothly and integrably over the zero section $D\subset \mathcal C_r$, and
    $$
        N_{D/(\mathcal C_r,\widetilde J)}\simeq L.
    $$
    The estimates in \eqref{eq:global-acyl-estimates} are preserved, with a possibly smaller rate, by this asymptotic change of coordinates. Restrict the target of $F$ to a standard punctured disc bundle $\mathcal C_r^*$. Theorem~\ref{thm:kahler_compactification} gives a smooth $\widetilde J$-K\"ahler form $\widehat\omega$ on $\mathcal C_r$ such that
    $$
        \widehat\omega=F^*\Phi^*\omega
        \qquad\text{on }\{r/2<\rho<r\}.
    $$

    On the punctured cap, denote
    $$
        \Psi=\Phi\circ F.
    $$
    Use the open outer piece
    $$
        X_{\mathrm{out}}=X\setminus\Psi\bigl(\{0<\rho\leq r/2\}\bigr).
    $$
    Since $\widetilde J=F^*\Phi^*J$, $\Psi$ is biholomorphic. Glue $X_{\mathrm{out}}$ and $\mathcal C_r$ along $\{r/2<\rho<r\}$ using $\Psi$. The resulting complex manifold $M$ is smooth and compact, and $M\setminus D\simeq X$.

    The equality on the outer annulus also gives a global K\"ahler form: define $\omega_M$ to be $\omega$ on $X_{\mathrm{out}}$ and $\widehat\omega$ on $\mathcal C_r$. These forms agree under $\Psi$ on the annulus, so they glue to a smooth closed positive $(1,1)$-form on $M$.

    Next we identify the canonical bundle. Fix a local chart of $D$, denote it by $U$. Then $U\times \Delta_w$ be a chart in $L$. Let $u=(u_1,u_2,\cdots, u_{n-1})$ be the holomorphic coordinate on $D$ extended to local holomorphic chart on $M$ and pull back to $L$. Let $z$ be a local $\widetilde J$-holomorphic defining function for $D$. Along each holomorphic fiber its expansion is
    $$
        z=w\bigl(a+O(w)\bigr),\qquad a\ne0.
    $$
    Since $w\Omega_\ca=dw\wedge\pi^*\Omega_D$ and $\widetilde\Omega-\Omega_\ca=O'(\rho^{\tau'})$, we have $z\widetilde\Omega-adw\wedge\pi^*\Omega_D=O'(\rho^{1+\tau''})$ for some $\tau''>0$. Since $adw\wedge\pi^*\Omega_D$ is nowhere-vanishing on $D$, locally we have the expansion
    $$
    \widetilde\Omega:=F^*\Phi^*\Omega = b(w,u)\frac{dz}{z}\wedge du_1\wedge\cdots\wedge du_{n-1},\quad b(0,u)\neq 0
    $$
    on the punctured cap.  It implies that $z\widetilde\Omega$ is bounded and holomorphic on the punctured chart, and extends holomorphically across $D$ by removable singularities. These local extensions, together with $\Omega$ on $X$, define a nowhere-vanishing section $\Omega_M\in H^0(M,K_M(D))$.
    Finally, gluing preserves a neighborhood of the zero section, so $N_{D/M}\simeq L$.

    We now prove the reconstruction part. 
    
    The proof of Theorem~\ref{thm:kahler_compactification} gives $\widehat\omega-\widetilde\omega=\ddc_{\widetilde J}v$, where $v=\chi u+\phi-P$ vanishes on the gluing annulus. Hence $v\circ\Psi^{-1}$ extends by zero to a smooth function $V$ on $X$. Denoting $\beta=[\omega_M]$, we obtain
    $$
       \omega_M|_X-\omega=\ddc_J V,\qquad i^*\beta=[\omega],\qquad \iota_D^*\omega_M=\omega_D,
    $$
    where $i:X\hookrightarrow M$ and $\iota_D:D\hookrightarrow M$ are the inclusions. In particular, $\beta$ is a K\"ahler class extending $[\omega]$ with the prescribed restriction to $D$.
    
    Apply Theorem~\ref{thm:ma} to $(M,D,\beta)$ with the original $\sigma$ and flat metric $h$ on $L$, and take $\Omega_X=\Omega$ using the meromorphic extension above. Choose the normal-bundle identification $\tilde \Psi: N_{D/M}\to M$ in Section~\ref{sec:Acyl_mfd} and $\Sigma := \Psi^{-1} \circ \tilde \Psi$. We know that $\Sigma-\mathrm{Id}$ is smooth. Moreover, writing $\Psi^{-1}$ and $ \tilde \Psi$ in local coordinates we have $\Sigma$ is $O(\rho)$ horizontally and $O(\rho^2)$ vertically in smooth coordinate, hence by $O'(e^{-t})$ in cylindrical coordinates. Thus the reconstructed form $\omega_{0}$ and $\omega$ have exponentially decaying difference in the same end coordinates, and
    $$
       [\omega_{0}]=i^*\beta=[\omega],\qquad \omega_{0}^n=c_\sigma\Omega\wedge\bar\Omega=\omega^n.
    $$
    
    The last equality holds because $\omega$ and $\Omega$ are parallel and their volume ratio is fixed by the cylindrical limit. Since $D$ is connected, $X$ has one end. The uniqueness theorem~\cite[Theorem~E]{HHN15} therefore gives $\omega_{0}=\omega$, proving reconstruction.
\end{proof}

\section{Compactification of an ALH gravitational instanton}\label{sec:ALH}
We first recall the model of ALH gravitational instanton and describe a holomorphic compactification.
\subsection{Coordinate on the model}\label{sec:coordinate}
\begin{definition}\label{d:ALH_model_Rm}
    Let $X_\infty=\R^3/\Lambda\times (R,+\infty)\simeq\T^3\times (R,+\infty)$ and let $r$ be the coordinate on $(R,+\infty)$. Let $g_\infty=g_{\T^3}+dr^2$ be a flat metric on $X_\infty$.
\end{definition}

We start with arbitrary splitting flat K\"ahler metric structure on $(\mathbb{R}^3/\Lambda^3\times\mathbb{R}, g_0, J_0)$. It uniquely lifts to $(\mathbb{R}^3\times\mathbb{R}, g, J)$. Take orthonormal coordinate $(x,y,\vartheta,r)$ we have standard flat metric $dx^2+dy^2+d\vartheta^2 + dr^2 $ and complex structure $Jd\vartheta = -dr, J dx = -dy$ on $\mathbb{R}^3/\Lambda^3\times\mathbb{R}$. Let $\Lambda^3 = \mathrm{Span}_{\mathbb{Z}}\{e_1,e_2,e_3\} $ with 
$$e_i = (a_i,b_i,c_i,0)\in \mathbb{R}^4 \text{ or } = (a_i+\sqrt{-1}b_i,c_i)\in \mathbb{C}^2.$$
By scaling the metric we can assume $e_1 = (a,b,1,0)$.

Take $A$ to be $\begin{pmatrix}
    1&-(a+\ii b)\\0&1
\end{pmatrix}$. we have following biholomorphism descending to the quotient:
$$
    A: \mathbb C^2/\Lambda^3\to \mathbb C^2/ (A\cdot\Lambda^3)
$$
$A$ maps $e_1$ to $(0,0,1,0)$, denote the new lattice by 
$$
    e_i':= e_i\cdot A=(a_i', b_i',c'_i,0):=(a_i-ac_i,b_i-bc_i,c_i,0).
$$

Denote the new lattice by $\Lambda' = \mathrm{Span}_{\mathbb{Z}}\{e'_1,e'_2,e'_3\} = \mathbb{Z}(0,0,1,0)+\Lambda^2$, where $\Lambda^2: = \mathrm{Span}_{\mathbb{Z}}\{e'_2,e'_3\}$.

Define a $U(1)$ representation $\chi$ of $\Lambda^2$ by $\chi(e_i') = \exp(2\pi\ii c_i)$. It induces an action of $\Lambda^2$ on $\mathbb C\times\mathbb{C}^*$. For any $e\in \Lambda^2$
$$
    e\cdot(z,w) =  (z+e, \chi(e) w).
$$
and $(\mathbb{C}\times\mathbb{C}^*)/\Lambda^2$ is the total space of a flat line bundle minus the zero section over $\mathbb{T}^2$, denoted by $L^\times$. The exponential map 
\begin{align*}
    \Phi: \mathbb C_{z,u}^2 / \mathbb{Z}(0,1)&\to \mathbb{C}_z\times\mathbb{C}_w^*\\
    (z,u) &\mapsto (z,\exp(2\pi \ii u)).
\end{align*} 
induces the following equivariant biholomorphism between $\mathbb C^2/\Lambda' \to L^\times$.

Since $u=\vartheta+\ii r$ and $w=\exp(2\pi\ii u)$, the logarithmic radius is $t=-\log|w|=2\pi r$. Thus, $|w|=\exp(-2\pi r)\to0$ as $r\to\infty$. Thus the biholomorphism gives a compactification of $(\mathbb{R}^3/\Lambda^3\times\mathbb{R}, g_0, J_0)$ at infinity by adding a $\mathbb{T}^2$.

View the flat metric $g_{\infty}$ on $(\mathbb{C}\times\mathbb{C},(z,u))$ under $(x,y,\vartheta,t)\mapsto (x+iy,\vartheta+i t)$. It induces a K\"ahler Ricci flat metric on $(\mathbb{C}\times\mathbb{C}^*, (z,w))/\Lambda^2$ given by
$$
    \omega = \ddc (2(1-a^2-b^2)t^2+|z+2i(a+ib)t|^2)
$$

As a summary, we have 
\begin{proposition}\label{prop:ALH_model_cplx}
    For any $J_\infty$ be a parallel complex structure on $(X_\infty, g_\infty)$, there exists a unitary flat holomorphic line bundle $L$ over an elliptic curve, such that there is a biholomorphism
    $$
        (X_\infty,J_\infty)\simeq \mathcal{C},
    $$
    with $\mathcal{C}$ the punctured disc bundle defined in Section~\ref{sec:cplx-model}.
\end{proposition}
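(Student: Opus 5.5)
The proof is a linear-algebra normalization of the flat model, followed by an explicit change of coordinates. Nothing analytic enters: $(X_\infty,g_\infty)$ is flat, and a parallel complex structure lifts to a constant one on the universal cover $\R^3\times(R,\infty)\subset\R^4$.

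\textbf{Step 1: normal form.} Choose orthonormal coordinates $(x,y,\vartheta,r)$ on $\R^4$ in which $J_\infty$ is the standard structure with $J\,d\vartheta=-dr$ and $J\,dx=-dy$, so $r$ is the cylindrical coordinate. This is possible because a constant orthogonal complex structure on $\R^4$ has the unit vector $\partial_r$ normal to the $\T^3$ factor. Then $J\partial_r$ is a unit vector tangent to $\R^3$; call it $-\partial_\vartheta$, and complete it to an orthonormal basis. The resulting holomorphic coordinates are $z=x+\ii y$ and $u=\vartheta+\ii r$, and $\Lambda^3\subset\R^3\times\{0\}$ has generators $e_i=(a_i+\ii b_i,c_i)$.

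\textbf{Step 2: arranging $e_1$.} Choose the generators so that some $c_i\neq0$. This is possible because $\Lambda^3$ spans $\R^3$, so not every $c_i$ can vanish. Take $e_1$ to be a generator with $c_1\neq0$, and rescale the $u$-coordinate by a complex constant; this is a biholomorphism, but it changes the flat metric. Writing $e_1=(a+\ii b,1)$, apply the unipotent biholomorphism $A$ above, $(z,u)\mapsto(z-(a+\ii b)u,\,u)$. It sends $e_1$ to $(0,1)$, and $\Lambda'=\mathbb Z(0,1)\oplus\Lambda^2$ with $\Lambda^2$ spanned by $e_2',e_3'$.

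\textbf{Step 3: the quotient.} Quotienting by $\mathbb Z(0,1)$ via $w=e^{2\pi\ii u}$ gives $\mathbb C_z\times\mathbb C^*_w$, and the residual $\Lambda^2$-action is $(z,w)\mapsto(z+e,\chi(e)w)$ with $|\chi(e)|=1$. This exhibits the quotient as $L^\times$, with $L$ the flat unitary line bundle on the elliptic curve $\mathbb C/\Lambda^2$ defined by $\chi$.

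We must check that $\Lambda^2$ projects to a lattice in $\mathbb C_z$. Since $\Lambda'$ is discrete of rank $3$ in $\mathbb C\times\R$ and contains $(0,1)$, its projection to $\mathbb C$ is a discrete rank-$2$ lattice; otherwise $\Lambda'$ would fail to be cocompact in $\R^3$. Under this map the region $r>R$ corresponds to $|w|<e^{-2\pi R}$, since $t=2\pi r$. After rescaling $h$ by a constant, which is allowed because $h$ is only fixed up to scale, the region becomes $\{0<\rho<1\}=\mathcal C$.

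\textbf{Main obstacle.} The delicate step is the normalization $c_1=1$ in Step 2 together with the projection claim in Step 3. Scaling $u$ by a complex constant is harmless holomorphically, but to keep the form $\vartheta+\ii r$ we need a real scaling. So we choose $e_1$ primitive with $c_1\neq0$ and apply a real scaling of the whole metric, as the paper does. The fact that the $r$-direction is orthogonal to the lattice guarantees the $\vartheta$-components are real, so this scaling is legitimate. Since the proposition is only a biholomorphism statement, the metric rescaling causes no harm.
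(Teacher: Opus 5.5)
Your proposal is correct and follows the paper's construction essentially step for step: orthonormal coordinates with $u=\vartheta+\ii r$, a real rescaling so that $e_1=(a+\ii b,1)$, the shear $A$, the quotient by $\mathbb{Z}(0,1)$ via $w=e^{2\pi\ii u}$, and the unitary character $\chi(e_i')=e^{2\pi\ii c_i}$ on $\Lambda^2$. Your extra checks (existence of the adapted frame, that $\Lambda^2$ projects to a lattice in $\mathbb{C}_z$, and rescaling $h$ so the end becomes $\{0<\rho<1\}$) fill in details the paper leaves implicit; the one small addition is to take $c_1>0$, replacing $e_1$ by $-e_1$ if needed, before rescaling, so that $r\to+\infty$ corresponds to $|w|\to0$ rather than $|w|\to\infty$.
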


\begin{remark}\label{rmk:split-metric}
    In this coordinate construction, we can choose a complex structure such that $e_1 = (0,0,1,0)$. Equivalently, we have a complex structure such that the cylindrical metric splits as in Section~\ref{sec:cplx-model}.
\end{remark}

\subsection{Existence of compactification}
The hyperk\"ahler ALH gravitational instanton is defined as follows.
\begin{definition}\label{d:ALH_hyperkahler}
A complete connected hyperk\"ahler $4$-manifold $(X^4,g,I_1,I_2,I_3)$ is called a \emph{hyperk\"ahler ALH gravitational instanton of order $\tau>0$} if there exist a flat ALH model $(X_\infty,g_\infty)$, a parallel hyperk\"ahler triple $(I_{\infty,1},I_{\infty,2},I_{\infty,3})$ compatible with $g_\infty$, a bounded domain $K\subset X$, and a diffeomorphism
$$
    \Phi:X_\infty\longrightarrow X\setminus K
$$
such that
$$
    \Phi^*g-g_\infty,\Phi^*I_k-I_{\infty,k} =  O'(e^{-\tau r}).
$$
\end{definition}

\begin{theorem}\label{thm:canonical_compactification}
    Let $(X,g,I_1,I_2,I_3)$ be a hyperk\"ahler ALH gravitational instanton. For any $a=(a_1,a_2,a_3)\in S^2$, define $I_a=a_1I_1+a_2I_2+a_3I_3$, the complex surface $(X,I_a)$ has a smooth compactification
    $$
        X=M_a\setminus E_a
    $$
by an elliptic curve in $|K_{M_a}^{-1}|$ with unitary flat normal bundle.

Moreover, $M_a$ is a smooth rational surface and admits a presentation
$$
    M_a\cong \operatorname{Bl}_{p_1,\ldots,p_9}\PP^2,
$$
where $E_a$ is the proper transform of a smooth plane cubic and the blow-up centers lie on its successive proper transforms. 
\end{theorem}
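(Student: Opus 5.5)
The plan has two parts. First I produce the compactification for each $I_a$ from Theorem~\ref{thm:global_compactification}. Then I identify the compact surface as a nine-point blow-up of $\PP^2$ using the classification of surfaces.

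\textbf{Step 1: an ACyl Calabi--Yau structure with unitary-flat end.} Fix $a\in S^2$. The model triple $I_{\infty,k}$ is parallel for the flat metric $g_\infty$, so $I_{\infty,a}:=\sum a_kI_{\infty,k}$ is a parallel complex structure on $(X_\infty,g_\infty)$. By Proposition~\ref{prop:ALH_model_cplx} there is a unitary flat line bundle $L$ over an elliptic curve $D$ with $(X_\infty,I_{\infty,a})\simeq\mathcal C$. The biholomorphism is given by the explicit exponential map, so it is affine-linear in the coordinates $(z,u)$. Hence the $O'(e^{-\tau r})$ estimates transfer to $O'(e^{-\tau' t})$ estimates, with $t=2\pi r$.

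The model Kähler form $\omega_{\infty,a}=g_\infty(I_{\infty,a}\cdot,\cdot)$ is the flat form $\ddc\bigl(2(1-a^2-b^2)t^2+|z+2i(a+ib)t|^2\bigr)$. This form may not split in the form $\pi^*\omega_D+\sigma\ddc t^2$. By Remark~\ref{rmk:split-metric}, I would apply the shear $A$ to make the cylindrical metric split, or equivalently note that the mixed term is $\ddc$ of a function linear in $t$ plus a bounded correction. The aim is to land exactly in Definition~\ref{def:cplxAcyl}.

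For $\Omega$, I would take $\Omega_a$ to be the $I_a$-holomorphic $(2,0)$-form built from the other two Kähler forms of the hyperkähler rotation. Its model is parallel, hence a constant multiple of $\Omega_{\mathcal C}$. The decay of $\Phi^*\Omega_a-c\,\Omega_{\mathcal C}$ then follows from the decay of $g$ and the $I_k$.

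\textbf{Step 2: compactify.} Theorem~\ref{thm:global_compactification} now gives a compact Kähler surface $M_a$ and a smooth divisor $E_a\simeq D$ with $X\simeq M_a\setminus E_a$ biholomorphically. Moreover $E_a\in|K_{M_a}^{-1}|$, since $\Omega_M$ is a nowhere vanishing section of $K_M(E_a)$, and $N_{E_a}\simeq L$ is unitary flat. In particular $E_a^2=0$, and by adjunction $E_a$ is an elliptic curve.

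\textbf{Step 3: rationality, the main obstacle.} Since $-K_{M_a}=E_a$ is effective and nonzero, every plurigenus vanishes: a section of $mK$ would restrict to zero off $E_a$. So $\kappa(M_a)=-\infty$. We also have $K_{M_a}^2=E_a^2=0$.

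To exclude ruled surfaces over a curve $B$ of genus $g\ge1$, I would use the Albanese map. The curve $E_a$ is anticanonical, so for a fiber $F$ we get $E_a\cdot F=2$. Every $\mathbb P^1$ fiber maps to a point of $B$, while $E_a$ surjects onto $B$; hence $g\le1$.

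If $g=1$, the irregularity is $q(M_a)=1$. I would rule this out topologically. The ALH end is $\T^3\times\R$, and $M_a$ is obtained by gluing in a $D^2$-bundle over $T^2$. The inclusion-induced map $H^1(M_a)\to H^1(X)$ is injective, and $b_1(X)$ is known for ALH instantons; Chen--Chen gives $X$ simply connected, or at least $b_1(X)=0$. Hence $b_1(M_a)=0$. Because this rests on an external input about $b_1(X)$, I regard it as the delicate point.

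It follows that $M_a$ is rational. Noether's formula with $K^2=0$ and $\chi(\mathcal O)=1$ gives $e(M_a)=12$ and $b_2=10$. So $M_a$ is $\PP^2$ blown up nine times (possibly at infinitely near points), or a Hirzebruch surface blown up eight times, which is again a blow-up of $\PP^2$ unless it arises from $\mathbb F_0$ or $\mathbb F_2$. Those cases are handled by the standard elementary-transformation argument.

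\textbf{Step 4: the plane model.} The contractions can be chosen compatibly with $E_a$. Each $(-1)$-curve $C$ has $C\cdot E_a=-K\cdot C=1$. Successively contracting nine exceptional curves, each meeting the current image of $E_a$ transversally once, sends $E_a$ to a smooth curve of self-intersection $9$ that is anticanonical in $\PP^2$, namely a smooth cubic. Reversing the process shows that the blow-up centers lie on the successive proper transforms.
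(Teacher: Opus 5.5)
There is a genuine gap in Steps 1--2. For generic $a$, the pair $(I_a,\omega_a)$ with $\omega_a=g(I_a\cdot,\cdot)$ does \emph{not} satisfy Definition~\ref{def:cplxAcyl}. So Theorem~\ref{thm:global_compactification}, and in particular its K\"ahler part Theorem~\ref{thm:kahler_compactification}, cannot be applied, and neither of your two fixes works.

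\textbf{Why the shear does not help.} The shear $A$ is a holomorphic, non-isometric change of coordinates. It is exactly what produces the unitary flat bundle $L_a$, and the twisted form displayed before Proposition~\ref{prop:ALH_model_cplx} is the metric \emph{after} shearing. Remark~\ref{rmk:split-metric} only says the metric splits when $e_1=(0,0,1,0)$, i.e.\ when some lattice vector is parallel to $I_a\partial_r$. Since $a\mapsto I_a\partial_r$ is a bijection of $S^2$ onto the unit sphere of $T\T^3$, this happens for only countably many $a$.

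\textbf{Why the cross term cannot be absorbed.} It is $t$ times a real-linear function of $z$, which is multivalued on the elliptic curve, so it is not $\ddc$ of a global function. Cohomologically, write $e_1=(\mathrm{Re}\,\lambda,\mathrm{Im}\,\lambda,1,0)$ for the lattice vector generating the fibre circle in Section~\ref{sec:coordinate}, and set $z'=z-\lambda u$.
\begin{itemize}
\item On a cross-section $Y\cong\T^3$, the flat form $\omega_a$ restricts to $dx\wedge dy$.
\item By contrast, $\pi^*\omega_D|_Y$ is a multiple of $\frac{\ii}{2}dz'\wedge d\bar z'|_Y=dx\wedge dy+\mathrm{Im}(\bar\lambda\,dz)\wedge d\vartheta$.
\item These invariant classes in $H^2(Y)$ are not proportional when $\lambda\neq0$.
\end{itemize}
Both $\ddc$ of a function and exponentially decaying closed forms are exact on the end. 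Hence $\omega_a$ is not asymptotic to any $\pi^*\omega_D+\sigma\ddc t^2$. This is precisely why the paper says the model is ``twisted by a mixed class'' and uses only the complex-structure and volume-form estimates.

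\textbf{What survives, and what breaks.} Theorem~\ref{thm:extension} together with the canonical-bundle argument still applies. It gives a compact \emph{complex} surface $M_a$ with $E_a\in|-K_{M_a}|$ and $N_{E_a}\simeq L_a$, but K\"ahlerness is not established. Your Step 3 then breaks, because $\kappa=-\infty$ implies ``rational or ruled'' only for K\"ahler surfaces, and class VII must still be excluded.

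The paper excludes it as follows. A non-K\"ahler surface with $\kappa=-\infty$ has $b_1=1$ and $\chi(\mathcal O)=0$, so Noether's formula with $K^2=E_a^2=0$ gives $b_2=0$. Bogomolov's theorem then leaves only Hopf or Inoue surfaces, and neither carries a smooth connected reduced anticanonical curve.

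\textbf{How you could repair it.} You could use your own ingredients: move the Mayer--Vietoris computation $b_1(M_a)=b_1(X)=0$ before the classification, and invoke that compact surfaces with even $b_1$ are K\"ahler. This costs the external input from Chen--Chen.

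For irrational ruled surfaces, the paper avoids that input altogether. From $0\to K_{M_a}\to\mathcal O_{M_a}\to\mathcal O_{E_a}\to0$, together with $H^2(\mathcal O_{M_a})=0$ and $H^2(K_{M_a})\cong\mathbb C$, the restriction $H^1(\mathcal O_{M_a})\to H^1(\mathcal O_{E_a})$ vanishes. This contradicts injectivity of pullback along $E_a\to B$ for every $g(B)\ge1$, handling $g=1$ without any knowledge of $b_1(X)$.

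Your Step 4 is essentially the paper's argument.
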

\begin{proof}
Fix $a\in S^2$. Proposition~\ref{prop:ALH_model_cplx} identifies the complex end model with a punctured unitary flat line bundle $L_a\to E_a$ over an elliptic curve. But the metric $g_\infty$ is not the split metric on $L_a^\times$ but twisted by a mixed class. The hyperk\"ahler asymptotics give exponential convergence, with all derivatives, of $I_a$ and its parallel holomorphic two-form to the corresponding model tensors. Since $g_\infty$ and $g_{\mathcal{C}}$ both have constant coefficients in logarithmic coordinates, these estimates also hold in the cylindrical norms under $g_{\mathcal{C}}$. The holomorphic compactification and canonical-bundle arguments in the proof of Theorem~\ref{thm:global_compactification}, using Theorem~\ref{thm:extension}, give a smooth compact complex surface $M_a$ with
$$
X\simeq M_a\setminus E_a,\qquad -K_{M_a}\simeq E_a,\qquad N_{E_a/M_a}\simeq L_a.
$$
Here only the complex-structure and volume-form estimates are needed. 

Since $-K_{M_a}$ is effective, we have $\kappa(M_a) = -\infty$. By Enriques--Kodaira classification~\cite[Theorem~VI.1.1]{BHPV04}, if $M_a$ is non-K\"ahler surface with $\kappa=-\infty$ then it is of class VII with $b_1 = 1, h^{0,2}=0, h^{0,1}= 1$ thus $\chi(\mathcal{O}_{M_a})=0$. So by Noether formula $b_2=-K^2=0$. Hence, by Bogomolov theorem~\cite{Teleman1994,LiYauZheng1994}, it is Hopf or Inoue. However, Inoue surfaces contain no curves and Hopf surfaces also cannot have a reduced, connected, smooth anticanonical divisor. So $M_a$ has to be K\"ahler. The classification now implies that $M_a$ is rational or irrational ruled. In the latter case, let $f:M_a\to B$ be the ruling over a curve of positive genus. Since $E_a\cdot F=2$ for a general fiber $F$, the map $f|_{E_a}:E_a\to B$ is nonconstant, i.e. it has positive degree. Then pullback gives an injective, hence nonzero map
$$
    H^1(B,\mathcal O_B)\longrightarrow H^1(M_a,\mathcal O_{M_a})\longrightarrow H^1(E_a,\mathcal O_{E_a}).
$$
However, the exact sequence
$$
    0\longrightarrow K_{M_a}\longrightarrow\mathcal O_{M_a}
    \longrightarrow\mathcal O_{E_a}\longrightarrow 0
$$
shows that the restriction map
$$
    H^1(M_a,\mathcal O_{M_a})\longrightarrow H^1(E_a,\mathcal O_{E_a})
$$
is zero. Thus, $M_a$ has to be rational.

The following discussion refers to the proof of \cite[Proposition 3.1.4]{AR24} (Although this Proposition needs additional hypotheses, the following part is also true in our setting). Every $(-1)$-curve $C\subset M_a$ satisfies $E_a\cdot C=1$, so contracting $C$ preserves the smoothness and anticanonical property of $E_a$. The minimal rational model is either $\mathbb P^2$ or a Hirzebruch surface $\mathbb F_n$. In the latter case, intersection with the negative section gives $2-n\geq 0$, so the minimal model is $\mathbb F_0$ or $\mathbb F_2$. It is easy to check blowing up one point on $\mathbb F_0$ or blowing up one point on $\mathbb{F}_2$ away from the negative section is the same as successive blowing up two points on $\mathbb P^2$. Consequently,
$$
    M_a\simeq \operatorname{Bl}_{p_1,\ldots,p_9}\mathbb P^2.
$$
Since the anticanonical divisor of $\mathbb{P}^2$ is an elliptic curve, we know that the nine blow-up centers lie on its successive strict transforms.
\end{proof}
\begin{remark}
    Every hyperk\"ahler ALH gravitational instanton, after hyperk\"ahler rotation, is recovered by Theorem~\ref{thm:ma} from a K\"ahler class on a compactification obtained by nine successive blow-ups of $\PP^2$ along a smooth cubic and its successive strict transforms. We just need to choose complex structure of model in Section~\ref{sec:coordinate} as in Remark~\ref{rmk:split-metric} such that it is a special case of the cylindrical metrics in Section~\ref{sec:cplx-model} and use the reconstruction result in Theorem~\ref{thm:global_compactification}.
\end{remark}


\begin{thebibliography}{99}

\bibitem{AR24}
Anna Abasheva, Rodion D\'eev, ``Complex Surfaces With Many Algebraic Structures.'' International Mathematics Research Notices, Volume 2024, Issue 9, May 2024, Pages 7379–7400, https://doi.org/10.1093/imrn/rnad190.

\bibitem{BHPV04}
Barth, Wolf P. and Hulek, Klaus and Peters, Chris A. M. and Van de Ven, Antonius. ``Compact Complex Surfaces.'' 2nd ed., Springer Berlin / Heidelberg, 2014.

\bibitem{ChenChenI}
Chen, Gao, and Xiuxiong Chen. ``Gravitational instantons with faster than quadratic curvature decay. I.'' Acta Math. \textbf{227} (2021), no.~2, 263--307.

\bibitem{ChenChenIII}
Chen, Gao, and Xiuxiong Chen. ``Gravitational instantons with faster than quadratic curvature decay (III)'' Mathematische Annalen 380.1-2 (2021): 687--717.

\bibitem{CH13}
Conlon, Ronan J., and Hans-Joachim Hein. ``Asymptotically conical Calabi--Yau manifolds, I.'' Duke Mathematical Journal 162.15 (2013): 2855--2902.

\bibitem{ConlonHein15}
Conlon, Ronan J., and Hans-Joachim Hein. ``Asymptotically conical Calabi--Yau metrics on quasi-projective varieties.'' Geometric and Functional Analysis 25.2 (2015): 517--552.

\bibitem{FT26}
Filip, Simion, and Valentino Tosatti. ``Nonlinearizable embeddings of elliptic curves in rational surfaces.'' arXiv preprint arXiv:2605.04160 (2026).

\bibitem{GuedjZeriahi2017}
Guedj, Vincent, and Ahmed Zeriahi. ``Degenerate complex Monge--Amp\`ere equations.'' EMS Tracts in Mathematics, vol.~26, European Mathematical Society, Z\"urich, 2017.

\bibitem{HHN15}
Haskins, Mark, Hans-Joachim Hein, and Johannes Nordstr\"om. ``Asymptotically cylindrical Calabi--Yau manifolds.'' Journal of Differential Geometry 101.2 (2015): 213--265.

\bibitem{Hein12}
Hein, Hans-Joachim. ``Gravitational instantons from rational elliptic surfaces.'' Journal of the American Mathematical Society 25.2 (2012): 355--393.

\bibitem{HSVZ2022}
Hein, Hans-Joachim, Song Sun, Jeff Viaclovsky, and Ruobing Zhang. ``Nilpotent structures and collapsing Ricci flat metrics on the K3 surface.'' Journal of the American Mathematical Society 35.1 (2022): 123--209.

\bibitem{LiYauZheng1994}
Li, Jun, Shing-Tung Yau, and Fangyang Zheng. ``On projectively flat Hermitian manifolds.'' Communications in Analysis and Geometry 2.1 (1994): 103--109.


\bibitem{Teleman1994}
Teleman, Andrei-Dumitru. ``Projectively flat surfaces and Bogomolov's theorem on class $VII_0$ surfaces.'' International Journal of Mathematics 5.02 (1994): 253--264.

\bibitem{TY}
Tian, Gang, and Shing-Tung Yau. ``Complete K\"ahler manifolds with zero Ricci curvature. I.'' Journal of the American Mathematical Society. \textbf{3} (1990), no.~3, 579--609.

\bibitem{JZ24}
Zhang, Junsheng. ``Hermitian--Yang--Mills connections on some complete non-compact K\"ahler manifolds.'' Mathematische Annalen 390.3 (2024): 4535--4575.

\end{thebibliography}
\end{document}